\documentclass[12pt]{amsart}
\usepackage{amssymb,amsthm,amsmath, amsfonts}
\usepackage{graphicx}
\usepackage{fullpage}

\usepackage{xcolor}
\definecolor{vry}{RGB}{253, 231, 37}

\definecolor{vrg}{RGB}{94,201,98}

\definecolor{vrdg}{RGB}{33, 145, 140}

\definecolor{vrb}{RGB}{59,82,139}

\definecolor{vrp}{RGB}{68,1,84}

\definecolor{vro}{RGB}{249,142,9}

\definecolor{vrr}{RGB}{188,55,84}

\definecolor{vrnb}{RGB}{13,8,135}

\usepackage{hyperref}
\hypersetup{colorlinks=true, linkcolor=vrnb, citecolor=vrp, filecolor=vrr, urlcolor=vrr}

\theoremstyle{plain}
\newtheorem{theorem}{Theorem}[section]
\newtheorem{proposition}[theorem]{Proposition}
\newtheorem{lemma}[theorem]{Lemma}
\newtheorem{remark}[theorem]{Remark}

\newtheorem{corollary}[theorem]{Corollary}

\newtheorem{definition}[theorem]{Definition}

\newcommand{\veps}{\varepsilon} 

\newcommand{\Des}{\operatorname{Des}}

\newcommand{\des}{\operatorname{des}}
\newcommand{\Asc}{\operatorname{Asc}}
\newcommand{\asc}{\operatorname{asc}}
\newcommand{\Vl}{\operatorname{Vl}}

\newcommand{\vl}{\operatorname{vl}}
\newcommand{\Pk}{\operatorname{Pk}}
\newcommand{\pk}{\operatorname{pk}}

\newcommand{\x}{\mathbf{x}}
\newcommand{\E}{\mathbb E}
\newcommand{\Pbb}{\mathbb P}
\newcommand{\1}{\mathbf 1}

\newcommand{\Var}{\mathbb{V}}

\newcommand{\dd}{\delta}
\newcommand{\inv}{\operatorname{inv}}
\newcommand{\wt}{\operatorname{wt}}

\title{Analysis of the asymmetric shelf shuffle}
\author[R. Tripathi]{Raghavendra Tripathi }
\address{Raghavendra Tripathi. Division of Science, NYU Abu Dhabi, Abu Dhabi, UAE}
\email{r.tripathi@nyu.edu}
\thanks{I thank Prof. Persi Diaconis for his encouraging comments and for suggesting several useful references. I also thank Prof. Jason Fulman for answering some questions regarding~\cite{DiaconisFulmanHolmes2013}.}
\begin{document}

\begin{abstract}
In an asymmetric shelf shuffle, a deck of $n$ cards is dealt sequentially from the bottom and assigned one of the $m$ shelves uniformly at random. The card is placed at the top of the assigned shelf with probability $p$, and at the bottom of the assigned shelf with probability $(1-p)$. Analysis of the shelf shuffle has gained much attention recently, and the case $p=1/2$ was first treated by Diaconis--Fulman--Holmes [Ann. Appl. Prob. 23 (2013), no. 4, 1692--1720]. In this paper, we extend the analysis of the shelf shuffle to general $p\in (0, 1)$. In particular, we study the distribution of cycles, cycle lengths, number of descents, number of valleys, number of inversions, and the RSK shape of a permutation obtained from an asymmetric shelf shuffle. Our results extend the analysis of Diaconis--Fulman--Holmes to arbitrary $p$. Furthermore, our analysis of the distribution of descents and inversions is new even for $p=1/2$.
\end{abstract}

\keywords{shelf-shuffle; card shuffling; asymmetric shelf-shuffle; statistics, descents, inversions, valleys, peaks, cycle index, cycle length, RSK, RSK shape, generating function}
\subjclass[2020]{60C05; 05A05; 05A15}

\maketitle
\section{Introduction}\label{sec:Intro}
The analysis of card shuffling has a rich and deep history. The earliest works in this direction can be traced back to Hadamard~\cite{Hadamard} and Poincar\'e~\cite{Poincare1912}. The modern era of the analysis of card shuffling began with the rigorous analysis of the properties of the riffle-shuffle~\cite{Diaconis81Generating, Aldous83, aldous1986shuffling, bayer1992trailing, diaconis1995riffle}. We refer the reader to the recent book by Diaconis and Fulman~\cite{DiaconisFulman2023Shuffling} for a modern and comprehensive account of the mathematics of card shuffling, and~\cite[Chapter 4]{Diaconis88Group} and references therein. 

There are several mathematically interesting and deep aspects of card shuffling. The initial works in this area focused on the mixing properties of card shuffling schemes~\cite{chen2025cutoff, Nestoridi25Cutoff, Sellke22Cutoff}. This situates the card shuffling within the broader framework of mixing time and the cutoff phenomenon for Markov chains~\cite{salez2025modern}. Another aspect of card shuffling is card-guessing games with feedback. A player guesses the cards from a shuffled deck sequentially until the deck is exhausted. The player is given feedback after each guess (for example, being shown the last guessed card). The main questions of interest are the optimal strategy that maximizes the expected number of correct guesses and the number of correct guesses one can make under the optimal strategy. Finally, we mention another aspect of card shuffling, which is the focus of this paper. One is often interested in studying the statistics of the random permutations induced by a card shuffling scheme. On the one hand, the behaviour of these statistics can be useful in distinguishing a card shuffling from perfect randomization. Indeed, the mixing of particular statistics is often used to get a lower bound on the mixing time. On the other hand, in practical situations, one may be interested in sufficiently randomizing a given statistic, which can happen much earlier, before the whole deck mixes~\cite{clay2025limit,diaconis1995riffle}.

In this paper, we study several statistics of the asymmetric shelf shuffle, which is a generalization of the \emph{shelf shuffle} first studied by Diaconis, Fulman, and Holmes~\cite{DiaconisFulmanHolmes2013}. We refer to~\cite{DiaconisFulmanHolmes2013} for the fascinating backstory of the shelf shuffle and a detailed analysis of the shelf shuffle. We begin with the definition of asymmetric shelf shuffle introduced in~\cite{kuba}. 

\begin{definition}[Asymmetric shelf-shuffle]
Let $m\in \mathbb{N}$ be a natural number and $p\in (0, 1)$. An $(m, p)$-shelf shuffler has $m$ labelled shelves. A deck of cards labelled $1, \ldots, n$ is dealt from the bottom sequentially and assigned a shelf independently and uniformly at random. Each card is placed at the top or bottom of the existing pile, on the assigned shelf, with probability $p$ and $1-p$, respectively, independently of everything else. The shuffled deck is then formed by joining the piles in shelf order. 
\end{definition}

When $p=1/2$, this corresponds to the shelf shuffle of~\cite{DiaconisFulmanHolmes2013}, which we call the \emph{symmetric shelf shuffle}. We reserve the term shelf shuffle for the more general (asymmetric) shelf shuffle. We use $\nu_{n, m}^{(p)}$ to denote the law of the permutation induced by an $(m, p)$-shelf shuffle on a deck of $n$ cards. Throughout this paper, we will assume that $p\in (0, 1)$ and write $q=1-p$.

In a recent work, Chen and Ottolini~\cite{chen2025cutoff} proved the cutoff for the symmetric shelf shuffle, and Clay~\cite{clay2025limit} studied the limiting distribution for the descents and inversions in the symmetric shelf shuffle. Several properties of card guessing games after a symmetric shelf shuffle have been studied in~\cite{clay2025guessing, tripathi2026position}. The asymmetric shelf shuffle was introduced in~\cite{kuba}, where the authors also carried out a detailed analysis of the score in the complete feedback game after one round of shelf shuffle. For an arbitrary $p\in (0, 1)$, Tripathi obtained~\cite{Trip26Law} an explicit formula for $\nu_{n, m}^{(p)}$ and proved the cutoff (in terms of the number of shelves $m$) for the asymmetric shelf shuffle. In this paper, we carry this line of enquiry forward and study the statistics of the asymmetric shelf shuffle, including the distribution of cycle types, the distribution of RSK shape, and the distribution of descents, inversions, and inverse descents.

\subsection{Setup, notations, and background}
For a permutation $\pi=(\pi_1, \ldots, \pi_n)\in S_n$, we define the descent set and the valley set, respectively, as
\[
\Des(\pi) = \{i\in [n-1]: \pi_{i}>\pi_{i+1}\}, \quad \Vl(\pi) = \{2\leq i\leq n-1: \pi_{i-1}>\pi_i<\pi_{i+1}\}\;.
\]
We also define the ascent set $\Asc(\pi) := [n-1]\setminus\Des(\pi)$. We write $\des(\pi) = |\Des(\pi)|, \asc(\pi) = |\Asc(\pi)|$ and $ \vl(\pi) = |\Vl(\pi)|$ for the number of descents, ascents, and valleys in a permutation $\pi$, respectively. We also define the peaks in a permutation $\pi\in S_n$ as 
\[
\Pk(\pi):=\{2\leq i\leq n-1: \pi_{i-1}<\pi_i>\pi_{i+1}\}\;.
\]
We also set $\pk(\pi)=|\Pk(\pi)|$.

For a permutation $\pi\in S_n$, such that $d=\des(\pi), a=\asc(\pi)$ and $v=\vl(\pi)$, define a polynomial
\[F_\pi(u) := (1+pu)^{a-v}(1+qu)^{d-v}(1+u)^v\;.\]
The following result was shown in~\cite{Trip26Law}, and it will be repeatedly used throughout the paper. We record it for easy reference. 
\begin{theorem}[~\cite{Trip26Law}]
\label{thm:Law}
Let $n\geq 2, m\in \mathbb{N}$ and $p\in (0, 1)$. Then, for any $\pi\in S_n$, we have 
\begin{align*}
\nu_{n, m}^{(p)}(\pi)&= \frac{1}{m^n}[s^n](1+s)^{m}F_{\pi}(s)\;. 
\end{align*}
Here, and throughout this paper, we use $[u^n]P(u)$ to denote the coefficient of $u^n$ in the formal power series or polynomial $P$.
\end{theorem}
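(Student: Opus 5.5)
The plan is to describe the shuffled deck combinatorially as a concatenation of unimodal blocks. Then I compute the weight of each such decomposition as a product over the gaps between adjacent positions. Finally, I recognise the resulting sum as a coefficient extraction.

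\textbf{Step 1 (shape of one pile).} The cards are dealt in label order $1,2,\ldots,n$. Consider the pile formed on a single shelf, read top to bottom.
\begin{itemize}
\item The cards placed on top appear in decreasing order of label, because later cards sit higher.
\item Then comes the first card dealt to that shelf, which is the minimum of the pile.
\item Then the cards placed at the bottom appear in increasing order.
\end{itemize}
So each pile is a ``V-shaped'' word: decreasing, then increasing. Its minimum contributes probability $p+q=1$, since on an empty shelf top and bottom coincide. Every other card in the decreasing part contributes $p$, and every other card in the increasing part contributes $q$. Each card also contributes $1/m$ for its shelf choice.

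Consequently $m^n\nu_{n,m}^{(p)}(\pi)$ is a weighted count. It runs over all ways of writing $\pi$ as an ordered concatenation of $m$ possibly empty V-shaped segments, assigned to shelves $1,\ldots,m$ in order. Two bookkeeping points must be checked here:
\begin{itemize}
\item the orientation convention (dealt from the bottom, deck read top to bottom), which may turn ``V-shaped'' into ``$\Lambda$-shaped'' and swap the roles of $p$ and $q$;
\item that distinct shelf/top-bottom choices producing the same $\pi$ are counted correctly. This holds because the decomposition into piles, together with the shelf labels, determines all choices except the irrelevant first card on each shelf.
\end{itemize}

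\textbf{Step 2 (reduce to cut sets).} Fix a set $C$ of ``cut'' gaps among the $n-1$ gaps $(i,i+1)$, with $|C|=c$. This yields $c+1$ nonempty segments. The number of ways to place them on $m$ shelves in order, leaving the remaining shelves empty, is $\binom{m}{c+1}$. Hence
\[
m^n\nu_{n,m}^{(p)}(\pi)=\sum_{c}\binom{m}{c+1}W_c,
\]
where $W_c$ is the total weight of admissible cut sets of size $c$. On the other hand,
\[
[s^n](1+s)^mF_\pi(s)=\sum_k\binom{m}{n-k}[u^k]F_\pi(u).
\]
Setting $k=n-1-c$, it therefore suffices to prove the identity
\[
\sum_{C\ \text{admissible}}\wt(C)\,u^{\,n-1-|C|}=F_\pi(u).
\]
In words, each uncut gap carries a factor $u$.

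\textbf{Step 3 (local factorisation, the key step).} I will assign the weight of each non-minimal card to exactly one uncut gap.
\begin{itemize}
\item An uncut descent gap $(i,i+1)$ gives $p$ to its left card $\pi_i$, which then lies in a decreasing part.
\item An uncut ascent gap gives $q$ to its right card $\pi_{i+1}$.
\end{itemize}
In a V-shaped segment every non-minimal card receives exactly one such factor and the minimum receives none, so the weight is a product over uncut gaps.

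Admissibility says exactly that no segment has an interior turning point of the forbidden type. That is, at each such extremum the two adjacent gaps, one ascent and one descent, are not both uncut. These pairs are disjoint, since consecutive extrema of the same type are separated by a turning point of the other type. So the sum factorises gap by gap:
\begin{itemize}
\item a free descent gap contributes $1+pu$;
\item a free ascent gap contributes $1+qu$;
\item each forbidden pair contributes $1+pu+qu=1+u$.
\end{itemize}
With the paper's conventions the forbidden extrema should be the valleys. This gives $(1+pu)^{a-v}(1+qu)^{d-v}(1+u)^v$, which is $F_\pi(u)$.

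The main obstacle I expect is precisely this convention check. Reading the deck in the natural direction gives peaks together with $(1+pu)^{d-\pk}(1+qu)^{a-\pk}$. To match $F_\pi$ I must carefully fix how ``dealt from the bottom'' and the top-to-bottom reading interact: top placements at the face-down end reverse the word, which turns peaks into valleys and swaps ascents with descents. I would first verify this on $n=2,3$ by direct enumeration before writing the general bijection.
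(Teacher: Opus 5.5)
Your Steps 2 and 3 are sound as bookkeeping. The factor $\binom{m}{c+1}$ for placing $c+1$ nonempty segments on $m$ ordered shelves is right, as is the matching $k=n-1-c$ against $\sum_k\binom{m}{n-k}[u^k]F_\pi(u)$. So are the rule giving each non-extremal card exactly one factor and the factor $1+pu+qu=1+u$ for each forbidden pair (these pairs are indeed disjoint). The paper does not prove this theorem; it imports it from \cite{Trip26Law}, so your argument has to stand on its own.

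The real problem is the orientation in Step 1, and the repair you propose is false. With cards dealt in the order $1,2,\ldots,n$, your argument correctly yields $(1+pu)^{d-\pk(\pi)}(1+qu)^{a-\pk(\pi)}(1+u)^{\pk(\pi)}$. This is $F_{\pi^c}(u)$ for the value complement $\pi^c_i=n+1-\pi_i$, not $F_\pi(u)$. For $p\neq 1/2$ these are different laws: with $n=2$, $m=1$ the theorem gives $\nu(12)=p$ and $\nu(21)=q$, whereas dealing card $1$ first produces $21$ with probability $p$. Reversing the word cannot fix this. Reversal exchanges ascents and descents but sends a peak at $i$ to a peak at $n+1-i$, so it never converts $\pk$ into $\vl$. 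Only complementing values does that, and complementation corresponds to the dealing order, not the reading direction.

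The paper's convention is visible in the proof of Lemma~\ref{lem:mixed-standardization}, where top-placed cards appear in increasing order of label. There, dealing the deck $1,\ldots,n$ from the bottom means card $n$ is dealt first, and the final deck is read top to bottom. Then the first card on each shelf is the maximum of its pile, and each pile is $\Lambda$-shaped:
\begin{itemize}
\item first the top-placed cards (weight $p$), increasing;
\item then the maximum (weight $1$);
\item then the bottom-placed cards (weight $q$), decreasing.
\end{itemize}
Admissibility now forbids interior valleys. Your Step 3 rule must be replaced by the following: an uncut ascent gap gives $p$ to its left card, and an uncut descent gap gives $q$ to its right card. Every non-maximal card then receives exactly one factor and the maximum receives none. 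The gap-by-gap factorisation gives $(1+pu)^{a-v}(1+qu)^{d-v}(1+u)^{v}=F_\pi(u)$ directly, with no reversal step.

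With this change your proof is complete. As written, it proves the formula for the complemented model and justifies the switch with an incorrect claim about reversal.
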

For our purposes, it will also be convenient to rewrite $\nu_{n, m}^{(p)}(\pi)$ in another way. Throughout this paper, we use $w_0$ to denote the permutation $w_0=(n,n-1,\ldots,1)$. For a permutation $\pi\in S_n$, let $\rho=w_0\pi w_0^{-1}$. Then, $\rho$ and $\pi$ have the same cycle type, as the conjugation does not change the cycle type, and it can be easily verified that
\[
\des(\rho)=\des(\pi), \qquad \pk(\rho)=\vl(\pi).
\]
Using this and the fact that $\asc(\pi)+\des(\pi)=n-1$ for every $\pi\in S_n$, we get 
\begin{align*}
F_{w_0^{-1}\rho w_0}(s) = F_{\pi}(s)&=(1+ps)^{\asc(\pi)-\vl(\pi)}
(1+qs)^{\des(\pi)-\vl(\pi)}
(1+s)^{\vl(\pi)}  \\
& = (1+ps)^{n-1}\beta(s)^{\des(\rho)}\alpha(s)^{\pk(\rho)},
\end{align*}
where 
\[
\alpha(s):=\frac{1+s}{(1+ps)(1+qs)},
\qquad
\beta(s):=\frac{1+qs}{1+ps}.
\]
This leads to the following immediate corollary of Theorem~\ref{thm:Law}, which will be repeatedly used throughout this paper.
\begin{corollary}
\label{cor:Law}
    Let $n, m, p$ be as in Theorem~\ref{thm:Law}. Let $\pi, \rho, \alpha(s), \beta(s)$ be as above. Then, 
    \[
    {\nu_{n, m}^{(p)}(\pi)} = \frac{1}{m^n}[s^n](1+s)^{m}(1+ps)^{n-1}\beta(s)^{\des(\rho)}\alpha(s)^{\pk(\rho)}\;.
    \]
\end{corollary}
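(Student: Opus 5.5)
The plan is to plug into Theorem~\ref{thm:Law}, so the only thing to prove is the identity
\[
F_\pi(s)=(1+ps)^{n-1}\beta(s)^{\des(\rho)}\alpha(s)^{\pk(\rho)}
\]
as formal power series in $s$. Once it holds, the corollary follows by substituting it inside the coefficient extraction $[s^n](1+s)^mF_\pi(s)$. Note that $1+ps$ and $1+qs$ have constant term $1$, so they are invertible in $\mathbb{Q}[[s]]$. Hence $\alpha(s)$, $\beta(s)$ and their powers are legitimate formal power series. Moreover, the right-hand side is literally equal to the polynomial $F_\pi(s)$, so the coefficient extraction is unaffected.

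First I verify the two combinatorial identities $\des(\rho)=\des(\pi)$ and $\pk(\rho)=\vl(\pi)$ for $\rho=w_0\pi w_0^{-1}$. Since $w_0$ is an involution, in one-line notation $\rho_i=n+1-\pi_{n+1-i}$.
\begin{itemize}
\item \emph{Descents.} $\rho_i>\rho_{i+1}$ iff $\pi_{n+1-i}<\pi_{n-i}$, i.e.\ iff $n-i\in\Des(\pi)$. The map $i\mapsto n-i$ is a bijection of $[n-1]$, so it carries $\Des(\rho)$ onto $\Des(\pi)$, giving $\des(\rho)=\des(\pi)$.
\item \emph{Peaks and valleys.} $\rho_{i-1}<\rho_i>\rho_{i+1}$ iff $\pi_{n+2-i}>\pi_{n+1-i}<\pi_{n-i}$, i.e.\ iff $n+1-i\in\Vl(\pi)$. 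The map $i\mapsto n+1-i$ is a bijection of $\{2,\dots,n-1\}$, so it carries $\Pk(\rho)$ onto $\Vl(\pi)$, giving $\pk(\rho)=\vl(\pi)$.
\end{itemize}

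Next comes the algebra. Write $d=\des(\pi)$ and $v=\vl(\pi)$, and use $a=\asc(\pi)=n-1-d$. Then
\[
F_\pi(s)=(1+ps)^{n-1}\,(1+ps)^{-d-v}(1+qs)^{d-v}(1+s)^v .
\]
Expanding the definitions gives
\[
\beta^d\alpha^v=(1+qs)^{d}(1+ps)^{-d}\,(1+s)^v(1+ps)^{-v}(1+qs)^{-v},
\]
which matches the second factor exactly. Replacing $d$ by $\des(\rho)$ and $v$ by $\pk(\rho)$ via the previous step gives the identity above, and Theorem~\ref{thm:Law} yields the corollary.

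There is no real obstacle here. The only point needing care is the index bookkeeping in the first step: one must confirm that the reflection $i\mapsto n+1-i$ maps the interior positions $\{2,\dots,n-1\}$ to themselves. That is what makes the peak/valley correspondence a genuine bijection.
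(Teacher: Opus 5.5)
Your proof is correct and follows the paper's route exactly: conjugate by $w_0$ to get $\des(\rho)=\des(\pi)$ and $\pk(\rho)=\vl(\pi)$, rewrite $F_\pi(s)$ using $\asc(\pi)=n-1-\des(\pi)$, and substitute into Theorem~\ref{thm:Law}. You also write out the index check $\rho_i=n+1-\pi_{n+1-i}$ that the paper leaves as ``easily verified.'' One trivial slip: the ambient ring should be $\mathbb{R}[[s]]$ rather than $\mathbb{Q}[[s]]$, since $p$ need not be rational.
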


\subsubsection*{Gessel and Zhuang formulas}
We also record two formulas due to Gessel and Zhuang, which are used in our derivation of the generating functions. Recall that any permutation can be written uniquely as a product of disjoint cycles (up to the ordering of these cycles). For a permutation $\pi\in S_n$, we denote the number of cycles of length $i$ by $N_i(\pi)$. And, we define 
\begin{equation}
\label{eqn:CycleStructurePoly}
    X(\pi; {\mathbf{x}}) \equiv X(\pi) :=\prod_{i\geq 1}x_i^{N_i(\pi)}\;.
\end{equation}
We now define the peak-descent polynomial as
\begin{equation}
\label{eqn:PeakDescent}
    F_n^{\pk,\des}(y,t;\mathbf{x}):=\sum_{\rho\in S_n}
y^{\pk(\rho)+1}t^{\des(\rho)+1}X(\rho;\mathbf{x}).
\end{equation}

We record the following formula~\eqref{eq:gz} from Gessel and Zhuang~\cite[Theorem 3.9]{GZ20} (for the proof see~\cite[Theorem 7.1]{GS20Plethystic}) that will be crucial in our proof. 
\begin{align}
\label{eq:gz}
&\frac{1}{1-t}
+
\frac{1}{1+y}
\sum_{n\ge 1}
\left(\frac{1+yt}{1-t}\right)^{n+1}
F_n^{\pk,\des}\left(
\frac{(1+y)^2t}{(y+t)(1+yt)},
\frac{y+t}{1+yt};\mathbf{x}
\right)z^n
\nonumber \\
&\quad =
\sum_{k\ge 0}t^k
\prod_{i\ge 1}
\exp\left\{
\sum_{r\ge 1}
\frac{(x_i z^i)^r}{ir}
\sum_{d\mid i}
\mu(d)
\bigl(k(1-(-y)^{dr})\bigr)^{i/d}
\right\},
\end{align}
where $\mu:\mathbb{N}\to \{-1, 0, 1\}$ is the standard number theoretic M\"obius function defined as 
\[
\mu(x) = \begin{cases}
    1, & x=1, \\
    (-1)^k, & \text{if }x\text{ is a product of }k\text{ distinct primes}, \\
    0, & \text{otherwise}
\end{cases}.
\]
When ${\bf x}={\bf 1} = (1,\ldots, 1)$, we define, by an abuse of notation,
\[
F_n^{\pk,\des}(y,t)\equiv F_n^{\pk,\des}(y, t; {\bf 1}) =\sum_{\rho\in S_n}
y^{\pk(\rho)+1}t^{\des(\rho)+1}\;.
\]
Gessel and Zhuang (see~\cite[Theorem 3.1]{GS20Plethystic} and~\cite[Equation 3.1]{GS20Plethystic}) provide an explicit formula for $F_n^{\pk, \des}$ which we record below.
\begin{equation}
\label{eq:GZ-run}
F_n^{\pk,\des}
\left(\frac{(1+y)^2r}{(y+r)(1+yr)},\frac{y+r}{1+yr}\right)
=
\left(\frac{(1+y)(1-r)}{1+yr}\right)^{n+1}\sum_{k\geq1}k^n r^k.
\end{equation}

The power series $\sum_{k\geq 1}k^nr^k$ is the classical Eulerian-series form: $(1-r)^{n+1}\sum_{k\geq 1}k^nr^k$ is the Eulerian polynomial. Eulerian polynomials go back to Euler's 1755 book \emph{Institutiones calculi differentialis cum eius usu in analysi finitorum ac doctrina serierum} and frequently arise in probability and combinatorics~\cite{Comtet74,Tanny73Eulerian,Kyle15}.

\subsubsection{Contribution}
For the symmetric shelf shuffle, Diaconis--Fulman--Holmes~\cite{DiaconisFulmanHolmes2013} obtained an explicit formula for the law of the symmetric shelf shuffle $\nu_{n, m}^{(1/2)}$. They also derived several generating function identities and used them to study the distribution of cycle types~\cite[Section 3.4]{DiaconisFulmanHolmes2013}, distribution of RSK shape~\cite[Section 3.5]{DiaconisFulmanHolmes2013}, and the distribution of \emph{inverse descents}, that is, the descents in the inverse permutation~\cite[Section 3.6]{DiaconisFulmanHolmes2013}. We extend all of these results to a general asymmetric shelf shuffle and obtain some additional results. 

In Section~\ref{sec:CycleStructure}, we study the cycle structure of a permutation obtained from a shelf shuffle. In Theorem~\ref{thm:cycleIndex}, we obtain the generating function for the cycle index and use it to obtain the enumerator of the expected number of $i$-cycles in Corollary~\ref{cor:expected-cycles} and the expected number of fixed points in Corollary~\ref{cor:fixedPoints}. In Corollary~\ref{cor:geometric-and-limit}, we study the convergence of the joint distribution of the number of cycles of different lengths. We also show that for any $p\in (0, 1)$, the joint distribution of the $k$ largest cycle lengths has the same $\operatorname{Poisson--Dirichlet}(1)$ limit as in the case of uniform random permutation. 

In Section~\ref{sec:Descents}, we study the number of descents in the shelf shuffle. The results in this section are new even for $p=1/2$. In Theorem~\ref{thm:GF-Descents}, we obtain the generating function of the descents and use it to obtain the first two moments of the descents in Corollary~\ref{cor:MomentsOfDescents}. In Theorem~\ref{thm:varianceAsymptotics}, we prove that the variance of the number of descents in the shelf shuffle divided by $n$ converges to $pq$ when $m=o(n)$ and converges to $1/12$ when $n=o(m)$. This significantly strengthens an observation made (communicated to us privately) by Alexander Clay. 

In Section~\ref{sec:Inversions}, we obtain a generating function for the number of inversions in Theorem~\ref{thm:inv-gf} and derive an exact formula for the mean and the variance of the number of inversions in Corollary~\ref{cor:inv-moments}. To the best of our knowledge, these results are also new even for $p=1/2$. 

Finally, in Section~\ref{sec:RSK-and-inv-descent}, we study the distribution of the RSK shape of the inverse of the permutation obtained from the shelf shuffle. This is inspired from~\cite[Section 3.5, 3.6]{DiaconisFulmanHolmes2013}. We show (Theorem~\ref{thm:rsk-shape}) that the recording tableau of the inverse of the permutation obtained from a shelf shuffle depends only on the shape of the tableau. Using this, we obtain a generating function for the descents in the inverse permutation in Theorem~\ref{thm:inverse-descent-gf} and finally obtain the first two moments of the descents in the inverse permutation in Corollary~\ref{cor:inverse-desc-moments}.

\begin{remark}
\label{rem:DFH-des}
Let us clarify that there is a minor typo in~\cite[Section 3.5, 3.6]{DiaconisFulmanHolmes2013}. The distribution of the RSK shape and the distribution of descents carried out in Diaconis--Fulman--Holmes is actually true for the \emph{inverse} of the permutation $\pi$ obtained from a symmetric shelf shuffle. In particular, the formulas for the mean and the variance in~\cite[Corollary 3.11]{DiaconisFulmanHolmes2013} are valid for the inverse permutation. To the best of our knowledge, the exact variance for the number of descents in the permutation has not been derived elsewhere.  
\end{remark}

\section{Statistics of asymmetric shelf shuffle}
\label{sec:Statistics}

\subsection{Cycle structure}
\label{sec:CycleStructure}
We recall that for a permutation $\pi$, we denote by $N_i(\pi)$ the number of cycles of length $i$ in $\pi$.  Our next result describes the multivariate generating function of the number of cycles of length $i$ in a permutation $\pi\sim \nu_{n, m}^{(p)}$. 

\begin{theorem}
\label{thm:cycleIndex}
Fix $m\geq 1$ and $p\in (0, 1)$. Let $X(\pi; {\bf x})$ be as in~\eqref{eqn:CycleStructurePoly}. Then,
\begin{equation}
\label{eq:asym-cycle-index}
Z(u, {\mathbf{x}}):=1+\sum_{n\ge 1}u^n\sum_{\pi\in S_n}\nu^{(p)}_{n,m}(\pi)X(\pi ; \mathbf{x})
=
\prod_{i\ge 1}\exp\left\{\sum_{r\ge 1}\frac{B^{(m,p)}_{i,r}}{ir}\,x_i^r u^{ir}
\right\},
\end{equation}
where 
\begin{equation}
\label{eq:B-def}
B^{(m,p)}_{i,r}:=
\sum_{d\mid i} \mu(d) m^{\,i/d-ir} \bigl(p^{dr}-(-q)^{dr}\bigr)^{i/d},\qquad i,r\ge 1\;.
\end{equation}
\end{theorem}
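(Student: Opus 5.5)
The plan is to reduce \eqref{eq:asym-cycle-index} to the Gessel--Zhuang identity \eqref{eq:gz} with a suitable specialization of $y$, $t$, $z$, and then to identify the coefficient of $t^m$ on both sides.

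\textbf{Step 1: write the left side through the peak--descent polynomial.} By Corollary~\ref{cor:Law}, and because $\rho=w_0\pi w_0^{-1}$ is conjugate to $\pi$ (so $X(\pi;\mathbf x)=X(\rho;\mathbf x)$), summing over $\pi\in S_n$ is the same as summing over $\rho\in S_n$. This gives
\[
\sum_{\pi\in S_n}\nu^{(p)}_{n,m}(\pi)X(\pi;\mathbf x)=\frac{1}{m^n}[s^n]\,(1+s)^m(1+ps)^{n-1}\,\frac{F_n^{\pk,\des}(\alpha(s),\beta(s);\mathbf x)}{\alpha(s)\beta(s)} .
\]
So the whole statement is an identity about $F_n^{\pk,\des}$ evaluated at $(\alpha(s),\beta(s))$.

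\textbf{Step 2: choose the specialization of \eqref{eq:gz}.} The right side of \eqref{eq:gz} contains the factor $k(1-(-y)^{dr})$.
\begin{itemize}
\item Take $y=q/p$. Then $p^{dr}\bigl(1-(-y)^{dr}\bigr)=p^{dr}-(-q)^{dr}$, which is exactly the bracket in \eqref{eq:B-def}.
\item Take $z=pu/m$. Then $(x_iz^i)^r\,k^{i/d}\bigl(1-(-y)^{dr}\bigr)^{i/d}$ becomes $x_i^ru^{ir}\,m^{-ir}k^{i/d}\bigl(p^{dr}-(-q)^{dr}\bigr)^{i/d}$.
\item Read off the coefficient of $t^m$, so that $k=m$. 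The right side of \eqref{eq:gz} then becomes exactly $\prod_i\exp\{\sum_r B^{(m,p)}_{i,r}x_i^ru^{ir}/(ir)\}$.
\end{itemize}
Next I check that the arguments of $F_n^{\pk,\des}$ in \eqref{eq:gz} match $(\alpha(s),\beta(s))$. With $y=q/p$, the equation $(y+t)/(1+yt)=\beta(s)$ reads $(q+pt)/(p+qt)=(1+qs)/(1+ps)$. Cross-multiplying gives $(p-q)(1-t)=(p-q)st$, so $t=1/(1+s)$. For $p=1/2$ I would use the same substitution and argue by continuity in $p$, or simply verify it directly. One then checks routinely that with this $t$, $(1+y)^2t/((y+t)(1+yt))=\alpha(s)$. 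It also follows that $(1+yt)/(1-t)=(1+ps)/(ps)$ and $1/(1+y)=p$. Hence the $n$-th term on the left of \eqref{eq:gz} is
\[
p\Bigl(\tfrac{1+ps}{ps}\Bigr)^{n+1}F_n^{\pk,\des}(\alpha,\beta;\mathbf x)\,\tfrac{p^nu^n}{m^n}.
\]

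\textbf{Step 3: coefficient extraction (the main obstacle).} It remains to show that $[t^m]$ of this term, viewed as a power series in $t$, equals the $[s^n]$ expression from Step 1. I would also check that the $\frac1{1-t}$ term supplies the leading $1$ in $Z$.

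I would write $[t^m]H(t)=\operatorname{Res}_{t=0}H(t)t^{-m-1}\,dt$ and change variables $t=(1+s)^{-1}$. Then $t^{-m-1}\,dt=-(1+s)^{m-1}\,ds$, and $t=0$ corresponds to $s=\infty$. The residue becomes $-\operatorname{Res}_{s=\infty}$ of
\[
p^{n+1}(1+s)^{m-1}(1+ps)^{n+1}(ps)^{-n-1}F_n(\alpha,\beta),
\]
which is the coefficient of $s^{-1}$ in a Laurent expansion at infinity.

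Since $F_n(\alpha,\beta)/(\alpha\beta)$ times $(1+ps)^{n+1}$ is a polynomial in $s$ (from the explicit form of $F_\pi$), this coefficient should reduce to $[s^n](1+s)^m(1+ps)^{n-1}F_n/(\alpha\beta)$. Here the factor $\alpha\beta=(1+s)/(1+ps)^2$ absorbs the discrepancy between $(1+s)^{m-1}(1+ps)^{n+1}$ and $(1+s)^m(1+ps)^{n-1}$.

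The points I expect to need care are the bookkeeping of the powers of $p$ and the sign of the residue at infinity. I also need to justify that \eqref{eq:gz} may be used as an identity of formal power series in $t$ near $0$ for each fixed power of $z$. This holds because each coefficient of $z^n$ is rational in $t$ with poles only at $t=1$ and at $t=-1/y$.

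Summing over $n$ then yields \eqref{eq:asym-cycle-index}.
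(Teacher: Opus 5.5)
Your proposal is correct and is essentially the paper's proof. It uses the same specialization $y=q/p$, $t=1/(1+s)$, $z=pu/m$ of the Gessel--Zhuang identity~\eqref{eq:gz}, together with the fact that $(1+s)^{m-1}(1+ps)^{n+1}F_n^{\pk,\des}(\alpha(s),\beta(s);\mathbf{x})$ is a polynomial in $s$. The only difference is the order of the bookkeeping: you extract $[t^m]$ first and then change variables to $s$, whereas the paper substitutes $t=1/(1+s)$ first and uses $[s^{-1}](1+s)^{m-1}\sum_{k\ge 0}P(k)(1+s)^{-k}=P(m)$ to select $k=m$, which is the same residue computation read in reverse.
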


\begin{proof}
Using Corollary~\ref{cor:Law} and the definition of the peak-descent polynomial in~\eqref{eqn:PeakDescent}, we obtain
\begin{align*}
\sum_{\pi\in S_n}\nu^{(p)}_{n,m}(\pi)X(\pi;\mathbf{x})
&=\frac{1}{m^n}[s^n](1+s)^m(1+ps)^{n-1}\sum_{\pi\in S_n}\beta(s)^{\des(\rho)}\alpha(s)^{\pk(\rho)}X(\pi\textcolor{blue}{;} {\bf x}) \\
&=\frac{1}{m^n}[s^n](1+s)^m(1+ps)^{n-1}\sum_{\rho\in S_n}\beta(s)^{\des(\rho)}\alpha(s)^{\pk(\rho)}X(\rho\textcolor{blue}{;} {\bf x}) \\
&=\frac{1}{m^n}[s^n](1+s)^m(1+ps)^{n-1}
\alpha(s)^{-1}\beta(s)^{-1}
F_n^{\pk,\des}(\alpha(s),\beta(s);\mathbf{x}).
\end{align*}
Setting $y=\frac{q}{p}$, and $t=\frac{1}{1+s}$, we have
\[
\frac{y+t}{1+yt}=\beta(s),
\qquad
\frac{(1+y)^2t}{(y+t)(1+yt)}=\alpha(s),
\qquad
\frac{1+yt}{1-t}=\frac{1+ps}{ps}.
\]
Substituting these in the {Gessel--Zhuang} formula~\eqref{eq:gz} and extracting the coefficient of $z^n$ gives
\begin{equation}
F_n^{\pk,\des}(\alpha(s),\beta(s);\mathbf{x})
=
\frac{p^n s^{n+1}}{(1+ps)^{n+1}}
[z^n]\mathcal{R}(s,z;\mathbf{x}),
\label{eq:F-from-R}
\end{equation}
where
\begin{align*}
\mathcal{R}(s,z;\mathbf{x})
:=
\sum_{k\ge 0}(1+s)^{-k}
\prod_{i\ge 1}
\exp\left\{
\sum_{r\ge 1}
\frac{(x_i z^i)^r}{ir}
\sum_{d\mid i}
\mu(d)
\left(k\left(1-\left(-\frac{q}{p}\right)^{dr}\right)\right)^{i/d}
\right\}.
\end{align*}
Since $\alpha(s)\beta(s)=\frac{1+s}{(1+ps)^2}$, using~\eqref{eq:F-from-R} we obtain
\begin{equation}
\sum_{\pi\in S_n}\nu^{(p)}_{n,m}(\pi)X(\pi;\mathbf{x})=\left(\frac{p}{m}\right)^n
[s^{-1}](1+s)^{m-1}[z^n]\mathcal{R}(s,z;\mathbf{x}).
\label{eq:residue-stage}
\end{equation}
We now use the elementary residue identity {for polynomials $P$}:
\begin{equation*}
[s^{-1}](1+s)^{m-1}
\sum_{k\ge 0}P(k)(1+s)^{-k}
=P(m)
\end{equation*}
Applying this identity in \eqref{eq:residue-stage}, we get $\sum_{\pi\in S_n}\nu^{(p)}_{n,m}(\pi)X(\pi;\mathbf{x})$ is equal to
\begin{align*}
&\left(\frac{p}{m}\right)^n[z^n]\prod_{i\ge 1}
\exp\left\{\sum_{r\ge 1}\frac{(x_i z^i)^r}{ir}
\sum_{d\mid i}\mu(d)
\left(m\left(1-\left(-\frac{q}{p}\right)^{dr}\right)\right)^{i/d}
\right\}.
\end{align*}
Thus, $\sum_{n\geq 0}u^n\sum_{\pi\in S_n}\nu_{n, m}^{(p)}(\pi)X(\pi;\x)$ is the same as replacing $z$ by $pu/m$ in the product on the right-hand side of the above identity.  The coefficient of \(x_i^r u^{ir}\) in the exponent then becomes
\[
\frac{1}{ir}
\left(\frac{p}{m}\right)^{ir}
\sum_{d\mid i}\mu(d)
\left(m\left(1-\left(-\frac{q}{p}\right)^{dr}\right)\right)^{i/d}
=
\frac{B^{(m,p)}_{i,r}}{ir},
\]
with \(B^{(m,p)}_{i,r}\) as in \eqref{eq:B-def}. This completes the proof.

\end{proof}

\begin{remark}
\label{remark:DFH-thm-35}
When $p=q=1/2$, we have
\[
p^{dr}-(-q)^{dr}=2^{-dr}(1-(-1)^{dr}).
\]
This is zero unless both \(d\) and \(r\) are odd.  When \(d\) and \(r\) both are odd, then
\[
\bigl(p^{dr}-(-q)^{dr}\bigr)^{i/d}
=2^{i/d-ir}.
\]
Therefore
\[
B^{(m,1/2)}_{i,r}=
\begin{cases}
\displaystyle
\sum_{\substack{d\mid i\\ d\text{ odd}}}
\mu(d)(2m)^{i/d-ir}, & r\text{ odd},\\[1.2em]
0,& r\text{ even}.
\end{cases}
\]
The logarithm of the right-hand side of \eqref{eq:asym-cycle-index} is then
\[
\sum_{i\ge 1}
\sum_{\substack{r\ge 1\\ r\text{ odd}}}
\frac{x_i^r u^{ir}}{ir}
\sum_{\substack{d\mid i\\ d\text{ odd}}}
\mu(d)(2m)^{i/d-ir}.
\]
Setting $f_{i,m}:=
\frac{1}{2i}
\sum_{\substack{d\mid i\\ d\text{ odd}}}
\mu(d)(2m)^{i/d}$, this is equal to 
\[
\sum_{i\ge 1}
 f_{i,m}
\sum_{\substack{r\ge 1\\ r\text{ odd}}}
\frac{2}{r}\left(x_i\left(\frac{u}{2m}\right)^i\right)^r
=
\sum_{i\ge 1}
f_{i,m}
\log\left(
\frac{1+x_i(u/2m)^i}{1-x_i(u/2m)^i}
\right)\;.
\]
Thus, we get

\begin{equation*}
1+
\sum_{n\ge 1}u^n
\sum_{\pi\in S_n}
\nu^{(1/2)}_{n,m}(\pi)X(\pi;\mathbf{x})
=
\prod_{i\ge 1}
\left(
\frac{1+x_i(u/2m)^i}{1-x_i(u/2m)^i}
\right)^{f_{i,m}}.
\label{eq:dfh-cycle-index}
\end{equation*}
This is precisely~\cite[Theorem 3.5]{DiaconisFulmanHolmes2013}.
\end{remark}

For $\x = (1, \ldots, 1)$, it is easy to check that $Z(u; \x)=(1-u)^{-1}$. Differentiating~\eqref{eq:asym-cycle-index} with respect to $x_i$ and then setting all $x_j=1$ immediately gives the generating function for the expected number of $i$-cycles; we record it as a corollary.
\begin{corollary}
\label{cor:expected-cycles}
For fixed $i\geq 1$, we have 
\[
\sum_{n\ge 0}u^n\E_{\nu^{(p)}_{n,m}}[N_i]=\frac{1}{1-u}\sum_{r\ge 1}\frac{B^{(m,p)}_{i,r}}{i}u^{ir}.
\]
In particular, 
\begin{equation*}
\E_{\nu^{(p)}_{n,m}}[N_i]
=
\frac{1}{i}
\sum_{r=1}^{\lfloor n/i\rfloor}
B^{(m,p)}_{i,r}.
\end{equation*}
\end{corollary}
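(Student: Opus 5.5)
The plan is to differentiate the cycle index identity of Theorem~\ref{thm:cycleIndex} with respect to $x_i$ and then set $\x=\mathbf 1$. On the left-hand side of \eqref{eq:asym-cycle-index}, since $\partial_{x_i}X(\pi;\x)\big|_{\x=\mathbf 1}=N_i(\pi)$, differentiation term by term (legitimate as an identity of formal power series in $u$, with coefficients polynomial in the $x_j$) gives $\sum_{n\ge 0}u^n\E_{\nu^{(p)}_{n,m}}[N_i]$. The $n=0$ term contributes nothing, since the constant $1$ has zero derivative.

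On the right-hand side, write $Z(u,\x)=\exp\{L(u,\x)\}$, where
\[
L(u,\x)=\sum_{j\ge 1}\sum_{r\ge 1}\frac{B^{(m,p)}_{j,r}}{jr}x_j^ru^{jr}.
\]
Then $\partial_{x_i}Z=Z\cdot\partial_{x_i}L$. Only the $j=i$ terms of $L$ depend on $x_i$, so
\[
\partial_{x_i}L\big|_{\x=\mathbf 1}=\sum_{r\ge 1}\frac{B^{(m,p)}_{i,r}}{i}u^{ir}.
\]
It remains to check $Z(u,\mathbf 1)=(1-u)^{-1}$, which the paper notes just before the corollary. Directly, the left side at $\x=\mathbf 1$ is $1+\sum_{n\ge1}u^n\sum_\pi\nu^{(p)}_{n,m}(\pi)=\sum_{n\ge0}u^n$, because $\nu^{(p)}_{n,m}$ is a probability measure. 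Multiplying the two factors gives the displayed generating function.

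For the coefficient formula, I would extract $[u^n]$ from $\frac{1}{1-u}\sum_{r\ge1}\frac{B^{(m,p)}_{i,r}}{i}u^{ir}$. Multiplying by $(1-u)^{-1}$ forms partial sums, so
\[
[u^n]=\frac1i\sum_{r\ge1,\ ir\le n}B^{(m,p)}_{i,r},
\]
which is the sum over $1\le r\le\lfloor n/i\rfloor$.

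The main point needing care is justifying termwise differentiation and the evaluation at $\x=\mathbf 1$ inside the infinite product. I would handle this by working in the ring of formal power series in $u$: for each fixed power $u^N$, only finitely many pairs $(j,r)$ with $jr\le N$ contribute, so every coefficient is a polynomial in finitely many $x_j$. Differentiation and specialization therefore commute with coefficient extraction, and no analytic convergence is needed.
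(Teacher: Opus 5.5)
Your proposal is correct and follows essentially the same route as the paper: differentiate the cycle-index identity of Theorem~\ref{thm:cycleIndex} in $x_i$, set $\x=\mathbf 1$ using $Z(u,\mathbf 1)=(1-u)^{-1}$, and read off partial sums of the coefficients. Your formal-power-series justification of termwise differentiation adds care that the paper leaves implicit.
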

\begin{corollary}[Expected number of fixed points]
\label{cor:fixedPoints}
The expected number of fixed points in $\pi\sim\nu_{n, m}^{(p)}$ is
\begin{equation*}
\E_{\nu^{(p)}_{n,m}}[N_1(\pi)]
=\sum_{r=1}^{n}m^{1-r}\bigl(p^r-(-q)^r\bigr)
=\frac{p(1-(p/m)^n)}{1-p/m}+\frac{q(1-(-q/m)^n)}{1+q/m}.
\end{equation*}
\end{corollary}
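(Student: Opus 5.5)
The plan is to specialize Corollary~\ref{cor:expected-cycles} to $i=1$. This gives
\[
\E_{\nu^{(p)}_{n,m}}[N_1]=\sum_{r=1}^{n}B^{(m,p)}_{1,r}.
\]
Next I evaluate $B^{(m,p)}_{1,r}$ from the definition~\eqref{eq:B-def}. The only divisor of $i=1$ is $d=1$, and $\mu(1)=1$. The exponent of $m$ is $i/d-ir=1-r$, and the bracketed factor is raised to the power $i/d=1$. Hence
\[
B^{(m,p)}_{1,r}=m^{1-r}\bigl(p^r-(-q)^r\bigr),
\]
which is exactly the first expression claimed.

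For the closed form, I split the sum into two geometric series. For the first,
\[
\sum_{r=1}^n m^{1-r}p^r=p\sum_{j=0}^{n-1}(p/m)^j=\frac{p(1-(p/m)^n)}{1-p/m},
\]
which is valid since $p/m\le p<1$. For the second,
\[
-\sum_{r=1}^n m^{1-r}(-q)^r=q\sum_{j=0}^{n-1}(-q/m)^j=\frac{q(1-(-q/m)^n)}{1+q/m}.
\]
Adding the two gives the stated formula.

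The only point needing care is justifying that Corollary~\ref{cor:expected-cycles} applies as stated. That corollary comes from differentiating~\eqref{eq:asym-cycle-index} in $x_1$ at $\x=\mathbf 1$, using $Z(u;\mathbf 1)=(1-u)^{-1}$, and taking the coefficient of $u^n$. The factor $1/(1-u)$ turns coefficient extraction into the partial sum over $r\le n$. Since that corollary is already established in the paper, there is no real obstacle here: the computation is routine bookkeeping of signs and exponents.
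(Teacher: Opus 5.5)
Your proposal is correct and follows essentially the same route as the paper, which presents this corollary as the $i=1$ specialization of Corollary~\ref{cor:expected-cycles}. There, $B^{(m,p)}_{1,r}=m^{1-r}\bigl(p^r-(-q)^r\bigr)$ comes from the single divisor $d=1$, and the closed form follows by summing the two finite geometric series exactly as you do.
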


Theorem~\ref{thm:cycleIndex} has several other corollaries. We next state the following corollary, which generalizes~\cite [Corollary 3.6]{DiaconisFulmanHolmes2013}.

\begin{corollary}
\label{cor:geometric-and-limit}
Fix $m\geq 1$, $p\in (0, 1)$, and $u\in (0, 1)$.  
\begin{enumerate}
\item Let $N$ be geometrically distributed by
\[
\mathbb{P}(N=n)=(1-u)u^n,
\qquad n\ge 0\;,
\]
and, conditional on $N=n$, let $\pi$ have the law $\nu^{(p)}_{n,m}$. Then, every finite subcollection of $N_1,N_2,\ldots $ is independent and the probability generating function of $N_i$ is
\begin{equation}
G^{(m,p)}_{i,u}(x):=\E_{\pi}[x^{N_i}]=\exp\left\{\sum_{r\ge 1}\frac{x^r-1}{ir}B^{(m,p)}_{i,r}u^{ir}\right\}.
\label{eq:geom-pgf}
\end{equation}

\item Fix $i\geq 1$, and let $\pi_n\sim \nu^{(p)}_{n,m}$.  Then, for every finite set $I\subseteq\mathbb{N}$, the random variables $\{N_i(\pi_n):i\in I\}$ converge jointly to independent random variables with marginal pgfs $G^{(m,p)}_{i, 1}$. 
\end{enumerate}
\end{corollary}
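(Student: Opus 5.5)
Part (1) follows from Theorem~\ref{thm:cycleIndex} by the standard "randomize $n$" device. By definition of $N$ and the conditional law,
\[
\E\Bigl[\prod_{i}x_i^{N_i}\Bigr]=\sum_{n\ge 0}(1-u)u^n\sum_{\pi\in S_n}\nu^{(p)}_{n,m}(\pi)X(\pi;\mathbf{x})=(1-u)Z(u,\mathbf{x}).
\]
Since $Z(u,\mathbf 1)=(1-u)^{-1}$, we can write $1-u=1/Z(u,\mathbf 1)$. Dividing the product formula \eqref{eq:asym-cycle-index} at $\mathbf{x}$ by the same formula at $\mathbf 1$ then gives
\[
\E\Bigl[\prod_i x_i^{N_i}\Bigr]=\prod_{i\ge1}\exp\Bigl\{\sum_{r\ge1}\frac{(x_i^r-1)B^{(m,p)}_{i,r}u^{ir}}{ir}\Bigr\}.
\]
This factorizes over $i$. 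Setting $x_j=1$ for $j\notin I$ shows that the joint pgf of $(N_i)_{i\in I}$ is the product of the one-variable functions $G^{(m,p)}_{i,u}(x_i)$, which gives both independence and \eqref{eq:geom-pgf}. For these manipulations to be legitimate as actual expectations and not merely formal identities, I need absolute convergence for $|x_i|\le 1$ and $u\in(0,1)$. This holds because $|p^{dr}-(-q)^{dr}|\le 1$, so $|B^{(m,p)}_{i,r}|\le \sum_{d\mid i} m^{i/d-ir}\le i\,m^{i-ir}\le i$. Hence $\sum_{i,r}|B_{i,r}|u^{ir}/(ir)<\infty$.

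For part (2), I would extract coefficients from a generating function with a simple pole at $u=1$. Fix $I$ and $x_i\in[0,1]$ for $i\in I$, with $x_j=1$ otherwise. Write
\[
Z(u,\mathbf{x})=\frac{1}{1-u}\,H(u),\qquad H(u):=\prod_{i\in I}G^{(m,p)}_{i,u}(x_i),
\]
so that $\E[\prod_{i\in I}x_i^{N_i(\pi_n)}]=[u^n]\frac{H(u)}{1-u}=\sum_{k\le n}[u^k]H(u)$. This equals the partial sums of the Taylor series of $H$ at $u=1$. It therefore suffices to show that the power series of $H$ converges at $u=1$ to $H(1)=\prod_{i\in I}G^{(m,p)}_{i,1}(x_i)$. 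Here $H=\exp(L)$ is a finite product, with $L(u)=\sum_{i\in I}\sum_r c_{i,r}u^{ir}$ and $c_{i,r}=(x_i^r-1)B_{i,r}/(ir)$.

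The key point, and the main obstacle, is summability of the coefficients at $u=1$. The case $m=1$ is delicate: there $B_{i,r}$ does not decay geometrically in $r$ (e.g.\ for $i=1$, $B_{1,r}=p^r-(-q)^r$). For $m\ge2$ the bound above gives $|c_{i,r}|\le 2m^{i-ir}$, which is summable, and I would treat this case first. For $m=1$ I would instead use the refined bound $|p^{dr}-(-q)^{dr}|\le \max(p,q)^{dr}+\min(p,q)^{dr}\le 2\max(p,q)^{r}$, which decays geometrically since $\max(p,q)<1$. With this, $\sum_r|c_{i,r}|<\infty$ for every $m\ge1$, so $L$ has absolutely summable coefficients. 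Then $\exp(L)$ also has absolutely summable coefficients, because the coefficient series of $\exp$ of an $\ell^1$ series is dominated by $\exp$ of the absolute series. Consequently $\sum_k [u^k]H$ converges absolutely to $H(1)$.

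This yields convergence of the joint pgfs on $[0,1]^I$ to $\prod_{i\in I}G^{(m,p)}_{i,1}(x_i)$. Finally, I would check that each $G^{(m,p)}_{i,1}$ is a genuine pgf. It is a limit of the pgfs $G^{(m,p)}_{i,u}$ as $u\uparrow1$, and by the absolute convergence just established its Taylor coefficients are limits of nonnegative coefficients, so they are nonnegative and sum to $1$. Convergence of pgfs on $[0,1]^I$ to a proper pgf then gives joint convergence in distribution to independent variables with marginal pgfs $G^{(m,p)}_{i,1}$, by the standard continuity theorem for nonnegative integer-valued vectors.
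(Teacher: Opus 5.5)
Your proposal is correct and follows the paper's argument: in part (1) you multiply the cycle index by $1-u=1/Z(u,\mathbf 1)$ and read off the factorized joint pgf, and in part (2) you write the pgf of $\pi_n$ as $[u^n]H(u)/(1-u)$ and use geometric decay of $B^{(m,p)}_{i,r}$ in $r$ to pass to $H(1)$. You phrase this as absolute summability of the coefficients of $H$ where the paper uses analyticity of $H$ beyond $u=1$, which amounts to the same thing here. One minor correction: $B^{(m,p)}_{1,r}=p^r-(-q)^r$ does decay geometrically even when $m=1$, as your own refined bound shows. So the case split is unnecessary; only your crude bound $|B^{(m,p)}_{i,r}|\le i\,m^{i-ir}$ fails to capture the decay there.
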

\begin{proof}
    For any finite set $I$, set $x_i=1$ for all $i\notin I$ and observe that 
    \begin{align*}
        1+\sum_{n\geq 1}u^n\mathbb{E}_{\nu_{n, m}^{(p)}}\left[\prod_{i\in I}x_i^{N_i(\pi)}\right] &= \prod_{i\in I}\exp\left\{\sum_{r\geq 1}\frac{{B_{i, r}^{(m, p)}}}{ir}(x_i^r-1)u^{ir}\right\} Z(u, \1)\;.
    \end{align*}
    Recall that $Z(u, \1)=(1-u)^{-1}$, multiplying by $(1-u)$ on both sides, we get 
    \[
    \mathbb{E}_{\pi}\left[\prod_{i\in I}x_i^{N_i(\pi)}\right] = \prod_{i\in I}\exp\left\{\sum_{r\geq 1}\frac{{B_{i, r}^{(m, p)}}}{ir}(x_i^r-1)u^{ir}\right\}=: H_{I}(u, \x)\;,
    \]
    which proves the first part. 

    To prove the second part, we begin by noting that 
    \[
    1+\sum_{n\geq 1}u^n\mathbb{E}_{\nu_{n, m}^{(p)}}\left[\prod_{i\in I}x_i^{N_i(\pi)}\right] = \frac{H_{I}(u, \x)}{(1-u)}\;.
    \]
    We claim that $H_{I}(u, \x)$ is analytic in $u$ in a neighborhood of $1$ for all {${\bf x}$} satisfying $|x_i|\leq 1$. To see this, we note that for $i, r\geq 1$
    \[
    |B_{i, r}^{(m, p)}|\leq C_{i, m, p}\left(\frac{\max\{p, q\}}{m}\right)^{ir},
    \]
    for some constant $C_{i, m, p}>0$. Since $\max\{p, q\}<1$, it follows that 
    \[
    \sum_{r\geq 1}\frac{{B_{i, r}^{(m, p)}}}{ir}(x_i^r-1)u^{ir}
    \]
    is analytic in a ball of radius $>1$. Therefore, its exponential $H_{I}(u, \x)$ is also analytic in a ball of radius $>1$. Since $H_{I}(\cdot, {\bf x})$ is analytic in a small neighborhood of $1$, we get 
\[
    \lim_{n\to \infty} \mathbb{E}_{\nu_{n, m}^{(p)}}\left[\prod_{i\in I}x_i^{N_i(\pi)}\right] = \lim_{n\to \infty}[u^n] \frac{H_{I}(u, \x)}{(1-u)} = H_{I}(1, \x)\;.
\]
    This completes the proof.
\end{proof}

\begin{remark}
Let $p=q=1/2$ and let $f_{i,m}$ be as in Remark~\ref{remark:DFH-thm-35} and set $a_i(u):=\left(\frac{u}{2m}\right)^i$.
Under $\pi$ as in Corollary~\ref{cor:geometric-and-limit}\textup{(1)}, every finite subcollection of $(N_i)$ is independent, and
\begin{equation*}
\E_{\pi}[x^{N_i}]=
\left(\frac{1+x a_i(u)}{1+a_i(u)}\right)^{f_{i,m}}
\left(\frac{1-a_i(u)}{1-xa_i(u)}\right)^{f_{i,m}}.
\end{equation*}
Thus, $N_i$ is distributed as the convolution of
\[
\operatorname{Bin}\left(f_{i,m},\frac{a_i(u)}{1+a_i(u)}\right)
\qquad\text{and}\qquad
\operatorname{NegBin}\left(f_{i,m},a_i(u)\right),
\]
where \(\operatorname{NegBin}(f,a)\) has pgf \(((1-a)/(1-ax))^f\). For fixed \(m\) and \(i\), as \(n\to\infty\) under \(\nu^{(1/2)}_{n,m}\), every finite subcollection of cycle counts converges jointly to independent limits, and the limiting \(N_i\) is the convolution of
\[
\operatorname{Bin}\left(f_{i,m},\frac{1}{(2m)^i+1}\right)
\qquad\text{and}\qquad
\operatorname{NegBin}\left(f_{i,m},\left(\frac{1}{2m}\right)^i\right).
\]  
Thus, we recover~\cite[Corollary 3.6]{DiaconisFulmanHolmes2013}.
\end{remark}
The next corollary is the asymmetric analogue of~\cite[Corollary 3.7]{DiaconisFulmanHolmes2013}.  Let
\[L_1(\pi)\geq L_2(\pi)\geq \cdots\]
denote the cycle lengths of $\pi$, arranged in decreasing order.
\begin{corollary}[Large cycles]
\label{cor:large-cycles}
Fix $p\in(0,1)$ and $k\geq 1$. Let $m=m_n$ be any sequence of positive integers, fixed or growing with $n$.  If $\pi_n\sim \nu^{(p)}_{n,m_n}$ and $\sigma_n$ is uniform on $S_n$, then
\[
\begin{aligned}
\sup_{0\leq t_1,\ldots,t_k\leq 1}
\Bigg|&
\mathbb{P}\left(\frac{L_1(\pi_n)}{n}\leq t_1,\ldots,\frac{L_k(\pi_n)}{n}\leq t_k\right)\\
&-
\mathbb{P}\left(\frac{L_1(\sigma_n)}{n}\leq t_1,\ldots,\frac{L_k(\sigma_n)}{n}\leq t_k\right)
\Bigg|\longrightarrow 0.
\end{aligned}
\]
In particular, the ordered large cycles have the same $\operatorname{Poisson--Dirichlet}(1)$ limit as those of a uniform random permutation.
\end{corollary}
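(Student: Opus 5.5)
The plan is to reduce the statement to a comparison of \emph{correlation functions} of long cycles, computed directly from Theorem~\ref{thm:cycleIndex}. Fix $\varepsilon\in(0,1)$ and consider the point process $\Xi_n^\varepsilon$ on $[\varepsilon,1]$ with points $\{L_j(\pi_n)/n : L_j(\pi_n)\geq \varepsilon n\}$. It has at most $1/\varepsilon$ points, so its law is determined by, and depends continuously on, its factorial moment measures. Hence it suffices to show these converge to those of the uniform permutation, uniformly in $m=m_n$. The factorial moments are mixed moments $\E\bigl[\prod_l (N_{a_l})_{k_l}\bigr]$ with $a_l\geq \varepsilon n$. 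Since $\sum_l a_l k_l\leq n$ and $a_l\geq\varepsilon n$, only boundedly many configurations are relevant.

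\textbf{Step 1 (moment formula).} Differentiate \eqref{eq:asym-cycle-index} in the variables $x_{a_1},\dots,x_{a_j}$ (with multiplicities), set $\mathbf x=\1$, and use $Z(u,\1)=(1-u)^{-1}$. This gives
\[
\E_{\nu^{(p)}_{n,m}}\Bigl[\prod_l (N_{a_l})_{k_l}\Bigr]=[u^n]\frac{1}{1-u}\prod_l \Bigl(\sum_{r\ge1}\frac{B^{(m,p)}_{a_l,r}}{a_l}u^{a_l r}\Bigr)^{k_l}+(\text{terms from }r\text{-derivatives of }x^r).
\]
For the uniform permutation the same formula holds with $B_{i,1}=1$ and $B_{i,r}=0$ for $r\ge2$. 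There the answer is exactly $\prod_l a_l^{-k_l}$ when $\sum_l k_la_l\le n$, and $0$ otherwise.

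\textbf{Step 2 (uniform bounds on $B$).} Set $\theta=\max\{p,q\}<1$ and use $p^k+q^k\le\theta^{k-1}$ together with $m^{i/d-ir}\le 1$. For $d\ge2$ the $d$-th term of $B^{(m,p)}_{i,1}$ is at most $\theta^{(d-1)i/d}\le\theta^{i/2}$, while the $d=1$ term equals exactly $1$. Therefore
\[
|B^{(m,p)}_{i,1}-1|\le i\,\theta^{i/2},\qquad |B^{(m,p)}_{i,r}|\le i\,\theta^{i(r-1/d)}\le i\,\theta^{i}\quad(r\ge2),
\]
with constants independent of $m$. Plugging these into Step 1 with every $a_l\ge\varepsilon n$ shows that the $r=1$ part reproduces $\prod_l a_l^{-k_l}(1+O(n\theta^{\varepsilon n/2}))$. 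All terms involving some $r\ge2$, and all terms from the extra $x^r$-derivatives, are $O(n^C\theta^{\varepsilon n})$. Summing over the $O(n^{j})$ admissible tuples $(a_l)$ lying in given boxes, the factorial moment measures of $\Xi^\varepsilon_n$ differ from the uniform ones by $o(1)$, uniformly in $m$. This yields total-variation closeness of $\Xi_n^\varepsilon$ to its uniform counterpart $\Xi_n^\varepsilon(\sigma_n)$.

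\textbf{Step 3 (removing $\varepsilon$).} The event $\{L_1/n\le t_1,\dots,L_k/n\le t_k\}$ is measurable with respect to $\Xi_n^\varepsilon$ on the event $A_\varepsilon=\{\text{at least }k\text{ cycles of length}\ge\varepsilon n\}$, and when $t_i<\varepsilon$ it is contained in $A_\varepsilon^c$. By Step 2, $\Pbb(A_\varepsilon)$ for $\pi_n$ and for $\sigma_n$ differ by $o(1)$. For $\sigma_n$, $\Pbb(A_\varepsilon^c)\to\Pbb(\mathrm{PD}(1)_k<\varepsilon)$, which tends to $0$ as $\varepsilon\to0$. So the supremum over $t$ is at most $o(1)+2\,\Pbb_{\sigma_n}(A^c_\varepsilon)+o(1)$; let $n\to\infty$ and then $\varepsilon\to0$. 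The Poisson--Dirichlet statement then follows from the classical result for $\sigma_n$.

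The main obstacle I expect is Step 2's bookkeeping. The $d\ge2$ Möbius terms and the $r\ge2$ terms must be controlled \emph{uniformly in $m$}; the inequality $m^{i/d-ir}\le1$ is what makes this work, including the worst case $m=1$. In addition, summing the error over all tuples $(a_l)$ must still give $o(1)$. Exponential smallness $\theta^{\varepsilon n/2}$ against polynomially many tuples should suffice, but this is the step to check carefully.
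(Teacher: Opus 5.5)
Your argument is correct, but it takes a different route from the paper.

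\textbf{The paper's route.} The paper never computes moments of individual long-cycle counts. It factors the cycle index as $Z_{m,p}=Z_\infty A_{m,p}$, where $Z_\infty$ is the uniform cycle index. The correction $A_{m,p}$, built from $B^{(m,p)}_{i,r}-\1_{\{r=1\}}$, is analytic on a disk of radius $R>1$ uniformly in $m$, by essentially the same bounds you derive in Step 2. Coefficient extraction then gives a signed-mixture representation
$\E F(\lambda(\pi_n))=\sum_{|\eta|\le n}a_{m,p}(\eta)\,\E F(\lambda(\sigma_{n-|\eta|})\cup\eta)$,
with $\sum_\eta a_{m,p}(\eta)=1$ and $\sup_m\sum_\eta|a_{m,p}(\eta)|R^{|\eta|}<\infty$. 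In words, the cycle type of $\pi_n$ is, in a signed sense, that of a slightly smaller uniform permutation with a bounded number of bounded cycles adjoined. Such cycles cannot move $L_1/n,\dots,L_k/n$ in the limit.

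\textbf{Your route.} You compute the factorial moment measures of the long-cycle process $\Xi_n^\varepsilon$ directly from the cycle index. You then use the finite inclusion--exclusion (Janossy) inversion available for processes with at most $1/\varepsilon$ points.

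\textbf{What each buys.} The paper's argument is softer and more general. It transfers every cycle-type functional that is insensitive to adjoining $O(1)$ small cycles, with no point-process bookkeeping and no $\varepsilon$-truncation. However, it is purely qualitative, and it leans on the PD$(1)$ limit (with continuous distribution functions) to compare $\sigma_{n-|\eta|}$ with $\sigma_n$. Your argument is more hands-on but quantitative. It gives total-variation closeness of the whole configuration of cycles of length $\ge\varepsilon n$, with error $O_\varepsilon(n^{C_\varepsilon}\theta^{\varepsilon n/2})$ uniformly in $m$. It uses the classical PD$(1)$ result only to make $\Pbb_{\sigma_n}(A_\varepsilon^c)$ small.

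\textbf{Two cosmetic fixes.} First, your bound $|B^{(m,p)}_{i,r}|\le i\,\theta^{i(r-1/d)}$ has a free $d$. It should read $|B^{(m,p)}_{i,r}|\le i\,\theta^{i(r-1)}\le i\,\theta^{i}$ for $r\ge2$. Second, total variation requires summing the absolute per-atom errors over all atoms, not just over ``given boxes.'' Your count of $O(n^{K})$ atoms times exponentially small per-atom errors already handles this.
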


\begin{proof}
Let
\[
Z_{\infty}(u;\x)=\prod_{i\geq 1}\exp\left\{\frac{x_i u^i}{i}\right\}
\]
be the uniform cycle index.  By Theorem~\ref{thm:cycleIndex},
\[
Z_{m,p}(u;\x):=1+\sum_{n\geq1}u^n\sum_{\pi\in S_n}\nu^{(p)}_{n,m}(\pi)X(\pi;\x)
=Z_{\infty}(u;\x)A_{m,p}(u;\x),
\]
where
\[
A_{m,p}(u;\x)=\exp\left\{
\sum_{i,r\geq1}\frac{B^{(m,p)}_{i,r}-\1_{\{r=1\}}}{ir}x_i^r u^{ir}
\right\}.
\]
We first check that this extra factor has a radius of convergence larger than one, uniformly in $m$.  Since $p,q<1$, there are constants $C<\infty$ and $0<\gamma<1$, depending only on $p$, such that
\[
\left|B^{(m,p)}_{i,r}-\1_{\{r=1\}}\right|\leq C i\gamma^{ir},
\qquad i,r\geq1,
\]
uniformly in $m\geq1$.  Indeed, for $r=1$ the $d=1$ summand in $B^{(m,p)}_{i,1}$ is equal to $1$, and the remaining divisors $d\geq2$ are bounded by multiples of $(p^d+q^d)^{i/d}$.  For $r\geq2$, all divisors are bounded by multiples of $(p^{dr}+q^{dr})^{i/d}$.  In both cases the bases are bounded away from $1$, and the divisor factor is absorbed by increasing $\gamma$ slightly.  Hence $A_{m,p}$ is analytic for $|u|<1+\veps$ and $|x_i|\leq1$, with $\veps>0$ depending only on $p$.  Also $A_{m,p}(u;\1)=1$, because both $Z_{m,p}(u;\1)$ and $Z_{\infty}(u;\1)$ are equal to $(1-u)^{-1}$.

Write
\[
A_{m,p}(u;\x)=\sum_{\eta}a_{m,p}(\eta)u^{|\eta|}\prod_{i\geq1}x_i^{m_i(\eta)},
\]
where $\eta=(1^{m_1(\eta)}2^{m_2(\eta)}\cdots)$ ranges over integer partitions and $|\eta|=\sum_i i m_i(\eta)$.  The preceding absolute convergence implies that for some $R>1$,
\[
\sup_{m\geq1}\sum_{\eta}|a_{m,p}(\eta)|R^{|\eta|}<\infty,
\qquad
\sum_{\eta}a_{m,p}(\eta)=1.
\]
Let $\lambda(\rho)$ denote the partition of the integer $N$ formed by the cycle lengths of a permutation $\rho\in S_N$.  If $\lambda$ and $\eta$ are partitions, write $\lambda\cup\eta$ for the partition obtained by adjoining their parts and then arranging them in weakly decreasing order.  Coefficient extraction from $Z_{\infty}A_{m,p}$ gives, for every function $F$ on partitions of $n$ with $|F|\leq1$,
\[
\E_{\nu^{(p)}_{n,m}}F(\lambda(\pi))=
\sum_{|\eta|\leq n}a_{m,p}(\eta)
\E\left[F\left(\lambda(\sigma_{n-|\eta|})\cup\eta\right)\right],
\]
where $\sigma_N$ is uniform on $S_N$.

Now take
\[
F(\lambda)=\1\left\{L_1(\lambda)/n\leq t_1,\ldots,L_k(\lambda)/n\leq t_k\right\}.
\]
For each fixed $\eta$, adjoining the parts of $\eta$ changes the ranked cycle lengths by $o(n)$ after normalization by $n$.  Thus the classical convergence of uniform permutation cycles to Poisson--Dirichlet$(1)$, whose finite-dimensional distribution functions are continuous, gives
\[
\sup_{0\leq t_1,\ldots,t_k\leq1}
\left|
\E F\left(\lambda(\sigma_{n-|\eta|})\cup\eta\right)-
\E F\left(\lambda(\sigma_n)\right)
\right|\longrightarrow0
\]
for every fixed $\eta$.  The convergence is uniform over the finitely many $\eta$ with $|\eta|\leq M$, and the exponentially small bound above contributes $|\eta|>M$ arbitrarily small, uniformly in $m$.  This proves the asserted uniform convergence. 
\end{proof}

Finally, we record the following generalization of~\cite[Corollary 3.8]{DiaconisFulmanHolmes2013}.
\begin{corollary}[Joint distribution of descents, valleys, and cycle type]
\label{cor:joint-des-val-cycle}
For $t\in (0, 1)$, as a formal power series in $u$, one has
\begin{align}
&\frac{t}{1-t}+\sum_{n\ge1}u^n\sum_{\pi\in S_n}\frac{t^{\vl(\pi)+1}}{p^n(1-t)^{n+1}}(p+qt)^{\asc(\pi)-\vl(\pi)}
(q+pt)^{\des(\pi)-\vl(\pi)}X(\pi;\x)
\label{eq:joint-des-val-cycle}
\\
&\qquad=\sum_{m\ge1}t^m\prod_{i\ge1}
\exp\left\{
\sum_{r\ge1}
\frac{x_i^r u^{ir}}{ir}
\sum_{d\mid i}
\mu(d)m^{i/d}
\left(1-\left(-\frac{q}{p}\right)^{dr}\right)^{i/d}
\right\}.
\notag
\end{align}
The same identity holds with \(\vl(\pi)\) replaced by \(\pk(\pi)\).
\end{corollary}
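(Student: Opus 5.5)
The plan is to specialize the Gessel--Zhuang identity~\eqref{eq:gz} directly at $y=q/p$, keeping $t$ as a free parameter and taking $z=u$. No shelf-shuffle probabilities are needed; the corollary is a combinatorial identity.

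\textbf{Step 1: the arguments of $F_n^{\pk,\des}$.} With $y=q/p$ we have
\[
1+y=\tfrac1p,\qquad y+t=\tfrac{q+pt}{p},\qquad 1+yt=\tfrac{p+qt}{p}.
\]
Hence
\[
\frac{(1+y)^2t}{(y+t)(1+yt)}=\frac{t}{(q+pt)(p+qt)},\qquad \frac{y+t}{1+yt}=\frac{q+pt}{p+qt},\qquad \frac{1+yt}{1-t}=\frac{p+qt}{p(1-t)}.
\]

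\textbf{Step 2: expand the $n$-th term on the left of~\eqref{eq:gz}.} Insert these into the definition~\eqref{eqn:PeakDescent}. Each $\rho\in S_n$ then contributes
\[
p\cdot\frac{(p+qt)^{n+1}}{p^{n+1}(1-t)^{n+1}}\cdot t^{\pk(\rho)+1}(q+pt)^{\des(\rho)-\pk(\rho)}(p+qt)^{-\pk(\rho)-\des(\rho)-2}X(\rho;\x).
\]
Using $\asc(\rho)+\des(\rho)=n-1$, the exponent of $(p+qt)$ collapses to $\asc(\rho)-\pk(\rho)$. The prefactor collapses to $p^{-n}(1-t)^{-(n+1)}$. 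This is exactly the $\pk$-version of the summand in~\eqref{eq:joint-des-val-cycle}.

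\textbf{Step 3: pass from peaks to valleys.} Apply the bijection $\rho\mapsto\pi=w_0^{-1}\rho w_0$ from the discussion before Corollary~\ref{cor:Law}. It preserves cycle type, hence $X$, and it preserves $\des$ and therefore $\asc$. It also satisfies $\pk(\rho)=\vl(\pi)$. Reindexing the sum over $S_n$ gives the stated $\vl$-version, and the $\pk$-version is just the form obtained in Step 2.

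\textbf{Step 4: the right-hand side and the constant terms.} On the right of~\eqref{eq:gz}, the factor $k(1-(-y)^{dr})$ becomes $k\bigl(1-(-q/p)^{dr}\bigr)$; rename $k$ as $m$. The $k=0$ summand equals $1$, since every exponent vanishes ($i/d\ge1$). On the left, $\frac{1}{1-t}=1+\frac{t}{1-t}$. Cancelling the $1$ on both sides produces exactly~\eqref{eq:joint-des-val-cycle}, with the sum now over $m\ge1$.

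The only point needing care is that the identity is meant as a formal power series in $u$ with $t\in(0,1)$ fixed. For each fixed $n$, the coefficient of $u^n$ on the right is $\sum_m t^m P_n(m)$ with $P_n$ a polynomial. This converges absolutely for $t\in(0,1)$, so the coefficientwise comparison is legitimate. The rest is bookkeeping of exponents, where the main risk is an off-by-one in the powers of $p$ and $(p+qt)$.
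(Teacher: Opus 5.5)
Your proposal is correct, and it reaches the identity by a more direct route than the paper.

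\textbf{The paper's route.} It argues through the shuffle. Starting from Theorem~\ref{thm:Law}, it multiplies $\nu^{(p)}_{n,m}(\pi)$ by $t^m(m/p)^n$ and sums over $m\geq 1$. It then uses the residue evaluation $[s^n]\frac{t(1+s)}{1-t(1+s)}H(s)=\frac{t^n}{(1-t)^{n+1}}H\bigl(\frac{1-t}{t}\bigr)$ for $\deg H\leq n-1$. This shows that the $\pi$-summand on the left of \eqref{eq:joint-des-val-cycle} equals $\sum_{m\geq1}t^m(m/p)^n\nu^{(p)}_{n,m}(\pi)$. The right-hand side is then read off from Theorem~\ref{thm:cycleIndex} after rescaling $u$ by $m/p$, using $B^{(m,p)}_{i,r}(m/p)^{ir}=\sum_{d\mid i}\mu(d)m^{i/d}(1-(-q/p)^{dr})^{i/d}$. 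Finally, the $\pk$-version is deduced from the $\vl$-version by conjugating with $w_0$.

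\textbf{Your route.} You specialize \eqref{eq:gz} at $y=q/p$ with $t$ left free, which gives the $\pk$-version immediately. You then get the $\vl$-version by the same conjugation. The following steps all check out:
\begin{itemize}
\item the substitutions for the arguments of $F_n^{\pk,\des}$;
\item the collapse of the exponent of $p+qt$ to $\asc-\pk$, and of the prefactor to $p^{-n}(1-t)^{-(n+1)}$;
\item the cancellation of the $k=0$ term against the $1$ in $\frac{1}{1-t}=1+\frac{t}{1-t}$;
\item the passage from a formal identity in $t$ to $t\in(0,1)$.
\end{itemize}
For the last step, convergence of $\sum_m t^mP_n(m)$ on $|t|<1$ is indeed enough. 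That series is the Taylor expansion of the rational left-hand $u^n$-coefficient, so the two agree on the disk by analytic continuation.

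\textbf{Comparison.} The paper proves Theorem~\ref{thm:cycleIndex} by pushing \eqref{eq:gz} through the substitution $t=1/(1+s)$ and a residue extraction. Its argument for this corollary essentially runs that detour in reverse. Your route makes plain that the corollary is just a reparametrization of the Gessel--Zhuang formula, needing neither Theorem~\ref{thm:Law} nor Theorem~\ref{thm:cycleIndex}. What the paper's route buys is the probabilistic content, namely the per-permutation identity
\[
\sum_{m\geq1}t^m(m/p)^n\nu^{(p)}_{n,m}(\pi)=\frac{t^{\vl(\pi)+1}}{p^n(1-t)^{n+1}}(p+qt)^{\asc(\pi)-\vl(\pi)}(q+pt)^{\des(\pi)-\vl(\pi)}.
\]
This is what ties the left-hand side to the shelf shuffle and makes the statement the analogue of Diaconis--Fulman--Holmes' Corollary 3.8.
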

\begin{proof}
Recall from Theorem~\ref{thm:Law} that
\[
        \nu^{(p)}_{n,m}(\pi)
        =\frac{1}{m^n}[s^n](1+s)^m
        (1+ps)^{\asc(\pi)-\vl(\pi)}
        (1+qs)^{\des(\pi)-\vl(\pi)}
        (1+s)^{\vl(\pi)}.
\]
Multiplying by $t^m(\frac{m}{p})^n$ and summing over $m\geq 1$, we obtain $\sum_{m\ge1}t^m
\left(\frac{m}{p}\right)^n\nu^{(p)}_{n,m}(\pi)$ equals 
\begin{align*}
&=p^{-n}[s^n]\frac{t(1+s)}{1-t(1+s)}(1+ps)^{\asc(\pi)-\vl(\pi)}(1+qs)^{\des(\pi)-\vl(\pi)}(1+s)^{\vl(\pi)}.
\end{align*}
A simple residue calculation shows that for every polynomial $H$ of degree at most $n-1$, we have
\[
[s^n]\frac{t(1+s)}{1-t(1+s)}H(s)=\frac{t^n}{(1-t)^{n+1}}H\left(\frac{1-t}{t}\right).
\]
Applying this with $H(s)=(1+ps)^{\asc(\pi)-\vl(\pi)}(1+qs)^{\des(\pi)-\vl(\pi)}(1+s)^{\vl(\pi)}$, we get
\[
\sum_{m\ge1}t^m\left(\frac{m}{p}\right)^n\nu^{(p)}_{n,m}(\pi)= \frac{t^{\vl(\pi)+1}}{p^n(1-t)^{n+1}}(p+qt)^{\asc(\pi)-\vl(\pi)}(q+pt)^{\des(\pi)-\vl(\pi)}.
\]
Multiplying by $u^nX(\pi;\x)$ and summing over $n\geq 0$ and $\pi\in S_n$, we conclude the proof.

Since conjugation does not change the cycle type and conjugation by $w_0=(n, n-1, \ldots, 1)$ preserves the $\des$ and $\asc$ while changing the $\vl$ to $\pk$, the same conclusion holds with $\vl$ replaced by $\pk$. 

\end{proof}

\subsection{The number of descents}
\label{sec:Descents}
We now derive the generating function of the number of descents in $\pi\sim \nu_{n, m}^{(p)}$, and use it to obtain the mean and variance of the number of descents. Let us reiterate that this is different from the discussion in~\cite[Section 3.6]{DiaconisFulmanHolmes2013} and the results in this section are new even for $p=1/2$.

To simplify some expressions, we introduce some notations. Given $u, t, p$ and  $q=(1-p)$, we set 
\[
A_t(u):= 1+t+(p+qt)u\;.
\]
Let $\omega_t(u)$ be the unique solution to 
\[
\frac{\omega_t(u)}{(1+\omega_t(u))^2} = \frac{t(1+u)}{A_t(u)^2}, \qquad \omega_1(u) = \frac{1}{1+u}\;.
\]


\begin{theorem}
\label{thm:GF-Descents}
Let $m, n\geq 1$ and $p\in (0, 1)$ be fixed. Let $G_{n, m}^{(p)}(t) := \mathbb{E}_{\nu_{n, m}^{(p)}}[t^{\des(\pi)+1}]$.
Then, 
\[
G_{n, m}^{(p)}(t) = \frac{1}{m^n}[u^n](1+u)^{m-1}\left(A_t(u)\frac{1-\omega_t(u)}{1+\omega_t(u)}\right)^{n+1}\,\, \sum_{k\geq 1}k^n\omega_t(u)^{k}\;.
\]
\end{theorem}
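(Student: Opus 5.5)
The plan is to repeat the argument of Theorem~\ref{thm:cycleIndex} with $\mathbf{x}=\mathbf{1}$, keeping track of the extra descent variable $t$, and to use the explicit run formula~\eqref{eq:GZ-run} in place of~\eqref{eq:gz}.

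\emph{Step 1: reduce to the peak--descent polynomial.} Conjugation by $w_0$ preserves $\des$, so $\des(\pi)=\des(\rho)$. By Corollary~\ref{cor:Law},
\[
G^{(p)}_{n,m}(t)=\frac{1}{m^n}[s^n](1+s)^m(1+ps)^{n-1}\sum_{\rho\in S_n}t^{\des(\rho)+1}\beta(s)^{\des(\rho)}\alpha(s)^{\pk(\rho)}.
\]
Writing the inner sum as $\alpha(s)^{-1}\beta(s)^{-1}F_n^{\pk,\des}(\alpha(s),t\beta(s))$ and using $\alpha\beta=(1+s)/(1+ps)^2$, this becomes
\[
G^{(p)}_{n,m}(t)=\frac{1}{m^n}[s^n](1+s)^{m-1}(1+ps)^{n+1}F_n^{\pk,\des}(\alpha(s),t\beta(s)).
\]

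\emph{Step 2: match the arguments of~\eqref{eq:GZ-run}.} I would look for $y=y_t(s)$ and $r=r_t(s)$ with
\[
\frac{y+r}{1+yr}=t\beta(s),\qquad \frac{(1+y)^2r}{(y+r)(1+yr)}=\alpha(s).
\]
Multiplying the two equations gives
\[
\frac{(1+y)^2r}{(1+yr)^2}=\frac{t(1+s)}{(1+ps)^2}.
\]
Comparing~\eqref{eq:GZ-run} with the claimed formula suggests taking $r=\omega_t(s)$, and then requiring
\[
\frac{(1+ps)(1+y)}{1+y\omega_t(s)}=\frac{A_t(s)}{1+\omega_t(s)}.
\]
The last equation is linear in $y$, so it determines $y$ explicitly. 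One then checks two things. First, the product equation reduces exactly to the defining relation $\omega/(1+\omega)^2=t(1+s)/A_t(s)^2$. Second, the ratio equation $(y+r)/(1+yr)=t\beta$ holds. At $t=1$ this must recover the substitution from the proof of Theorem~\ref{thm:cycleIndex}, namely $y=q/p$ and $r=1/(1+s)$, and this gives a consistency check on the branch choice $\omega_1(u)=1/(1+u)$.

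\emph{Step 3: finish.} Substituting into~\eqref{eq:GZ-run} gives
\[
(1+ps)^{n+1}F_n^{\pk,\des}=\left(\frac{(1+ps)(1+y)(1-\omega)}{1+y\omega}\right)^{n+1}\sum_{k\geq 1}k^n\omega^k=\left(A_t(s)\frac{1-\omega}{1+\omega}\right)^{n+1}\sum_{k\ge1}k^n\omega^k.
\]
Renaming $s$ as $u$ then yields the theorem.

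\emph{Main obstacle.} I expect the main difficulty to be Step 2: carrying out the algebra that eliminates $y$, confirming that the second (ratio) equation is automatically satisfied rather than over-determining the system, and choosing the correct root of the quadratic for $\omega_t$. I would also need to justify the use of~\eqref{eq:GZ-run} as an identity of formal series or analytic functions near the relevant point. Since $\sum_k k^n\omega^k$ equals $A_n(\omega)/(1-\omega)^{n+1}$ with $A_n$ the Eulerian polynomial, everything is rational in $\omega$ and $y$. So~\eqref{eq:GZ-run} is a rational identity and may be specialized at any admissible $(y,r)$. The only remaining point is to check that the coefficient extraction $[u^n]$ is legitimate, i.e.\ that the resulting expression is a power series in $u$. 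This should follow from choosing the branch of $\omega_t$ that is analytic near $u=0$, which is the one pinned down by $\omega_1=1/(1+u)$.
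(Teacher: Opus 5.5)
Your proposal is correct and is essentially the paper's proof. You use the same reduction to $(1+u)^{m-1}(1+pu)^{n+1}F_n^{\pk,\des}(\alpha(u),t\beta(u))$ and the same substitution $r=\omega_t(u)$ in~\eqref{eq:GZ-run}, and your $y$ coincides with the paper's $y=(t\beta-r)/(1-rt\beta)$. Indeed, $(1+y)(1+r)=(1+yr)+(y+r)$ shows that your prefactor equation $\frac{(1+y)(1+r)}{1+yr}=1+t\beta(u)=\frac{A_t(u)}{1+pu}$ is equivalent to the ratio equation, so nothing is over-determined, and the $\alpha$-equation follows by dividing your product equation by the ratio equation.

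Your branch concern is also harmless. The expression $\bigl(A_t(u)\frac{1-\omega}{1+\omega}\bigr)^{n+1}\sum_{k\geq1}k^n\omega^k$ equals $\bigl(A_t(u)/(1+\omega)\bigr)^{n+1}E_n(\omega)$, where $E_n(\omega)=(1-\omega)^{n+1}\sum_{k\geq1}k^n\omega^k$ is palindromic. Hence the expression is invariant under $\omega\mapsto 1/\omega$, which swaps the two roots, so it is a rational function of $(t,u)$ equal to $(1+pu)^{n+1}F_n^{\pk,\des}(\alpha(u),t\beta(u))$ whichever root is used.
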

\begin{proof}
Recall that we define 
\[
F_n^{\pk,\des}(y,t)= \sum_{\rho\in S_n}
y^{\pk(\rho)+1}t^{\des(\rho)+1}\;.
\]
Let $\alpha(u)=\frac{1+u}{(1+pu)(1+qu)}$ and $\beta(u) = \frac{1+qu}{1+pu}$. Using Corollary~\ref{cor:Law} and the definition of $F_n^{\pk, \des}$, we get
\begin{align}
G_{n, m}^{(p)}(t) &= \frac{1}{m^n}[u^n](1+u)^{m}(1+pu)^{n-1}\frac{F_n^{\pk,\des}(\alpha(u),t\beta(u)\textcolor{blue}{)}}{\alpha(u)\beta(u)}\nonumber\\
&= \frac{1}{m^n}[u^n](1+u)^{m-1}(1+pu)^{n+1}F_n^{\pk,\des}(\alpha(u),t\beta(u)\textcolor{blue}{)}\;,
\label{eqn:G-definition}
\end{align}
where we used that $\alpha(u)\beta(u)= (1+u)/(1+pu)^2$ in the last equality. Set 
\[r=\omega_t(u), \qquad y=\frac{t\beta(u)-r}{1-rt\beta(u)}.\] It can be verified by a direct computation that
\[
\frac{(1+y)^2r}{(y+r)(1+yr)}= \alpha(u), \qquad \frac{y+r}{1+yr} = t\beta(u), \qquad \frac{(1+y)(1-r)}{(1+yr)} = (1+t\beta(u))\frac{1-r}{1+r}\;.
\]
Using~\eqref{eq:GZ-run}, we obtain 
\[
F_n^{\pk, \des}(\alpha(u), t\beta(u)) = \left[(1+t\beta(u))\frac{1-r}{1+r}\right]^{n+1}\,\sum_{k\geq 1}k^nr^k\;.
\]
The proof follows by combining this with $(1+pu)(1+t\beta(u)) = A_{t}(u)$ and~\eqref{eqn:G-definition}.

\end{proof}

We now use Theorem~\ref{thm:GF-Descents} to find the mean and variance of the number of descents in the shelf shuffle. It will be convenient to define the Faulhaber polynomial
\[S_j(m):=\sum_{a=0}^{m-1}a^j\;\qquad j, m\geq 1\;.\]
We refer an interested reader to~\cite{Knuth, Kellner} for an interesting account of the history of these polynomials as well as their properties.

\begin{corollary}[First two descent moments]
\label{cor:MomentsOfDescents}
Let $n,m\geq 1$, let $\pi\sim\nu_{n,m}^{(p)}$, and let $D_n=\des(\pi)$. Put $\dd=2p-1$. Then
\begin{equation}
\label{eq:mean-forward}
\E[D_n]
=
\frac{n-1}{2}
+
\dd\left(
 m-\frac{n+1}{2}
 -\frac{n+1}{m^n}S_n(m)
\right).
\end{equation}
Moreover,
\begin{align}
\label{eq:second-factorial-forward}
\E[D_n(D_n+1)]
&=
\frac{n(n+1)}{4}-\frac{m}{2}
+\frac{(n+1)mS_n(m)-(n+2)S_{n+1}(m)}{m^n} \notag\\
&
+\dd\left(
 nm-\frac{n(n+1)}{2}
 -\frac{n(n+1)}{m^n}S_n(m)
\right) \notag\\
&
+\dd^2\left(
 m^2-\left(n+\frac12\right)m+\frac{n(n+1)}{4}\right. \notag\\
&\qquad\left.
 +\frac{(n^2-1)mS_n(m)-n(n+2)S_{n+1}(m)}{m^n}
\right).
\end{align}
\end{corollary}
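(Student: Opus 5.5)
Since $G_{n,m}^{(p)}(t)=\E[t^{D_n+1}]$, we have $\E[D_n]=G'(1)-1$ and $\E[D_n(D_n+1)]=\tfrac12 G''(1)\cdot 2 - \ldots$; more precisely $G'(1)=\E[D_n+1]$ and $G''(1)=\E[D_n(D_n+1)]$. So both identities follow from the first two $t$-derivatives at $t=1$ of the formula in Theorem~\ref{thm:GF-Descents}. I will differentiate under the coefficient extraction $[u^n]$, which is legitimate because everything is a formal power series in $u$ whose coefficients are analytic in $t$ near $1$. The plan is to expand each factor to second order in $\veps=t-1$, multiply the expansions, and then extract $[u^n](1+u)^{m-1}(\cdots)$ term by term.

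\textbf{Expanding the ingredients.} At $t=1$ we have $A_1(u)=2+u$, $\omega_1=1/(1+u)$, and $(1-\omega_1)/(1+\omega_1)=u/(2+u)$. Hence $A_1(1-\omega_1)/(1+\omega_1)=u$, and the factor $u^{n+1}\sum_k k^n\omega_1^k$ returns the value $G(1)=1$; this serves as a consistency check via the residue identity used in the proof of Theorem~\ref{thm:cycleIndex}. Next I differentiate the defining relation $\omega/(1+\omega)^2=t(1+u)/A_t(u)^2$ implicitly to obtain $\omega'_t$ and $\omega''_t$ at $t=1$ as rational functions of $u$ and $p$. The derivative of $\omega/(1+\omega)^2$ vanishes at $\omega=1$, but $\omega_1\neq1$ as a series, so this causes no difficulty. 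Writing $\Phi(t,u)=A_t(1-\omega_t)/(1+\omega_t)$, I then expand $\Phi^{n+1}=u^{n+1}(1+\veps a_1(u)+\veps^2a_2(u)+\cdots)^{n+1}$. For the series I use $\sum_k k^n\omega^k$ with $\omega=\omega_1(1+\veps b_1+\veps^2 b_2)$, which gives $\sum_k k^n\omega_1^k\bigl(1+k\veps b_1+\veps^2(k b_2+\binom{k}{2}b_1^2)\bigr)$. The $t$-derivatives therefore produce sums $\sum_k k^{n+j}\omega_1^k$ for $j=0,1,2$, multiplied by rational functions in $u$.

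\textbf{Coefficient extraction.} The key tool is the residue identity $[u^{-1}](1+u)^{m-1}\sum_{k\ge0}P(k)(1+u)^{-k}\cdot u^{-j}\ldots$, generalized as follows. Since $\omega_1^k=(1+u)^{-k}$, each term has the form $[u^n](1+u)^{m-1}u^{n+1}R(u)\sum_k k^{n+j}(1+u)^{-k}$, where $R$ is a rational function. I expect $R$ to be a combination of powers $u^{\ell}$ (possibly with $\ell$ negative) and of $(2+u)^{-1}$ or $(1+pu)^{-1}$-type factors. The terms with $u^{-\ell}$ give, through $[u^{\ell-1}](1+u)^{m-1-k}$ evaluated as a binomial sum, finite differences. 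These collapse to the tail $\sum_{k\ge m}$ of $k^{n+j}$ against the corresponding residue, that is, to $m^{n+j}$ and to partial Faulhaber sums $S_{n}(m)$ and $S_{n+1}(m)$. This is how the $S_n(m)/m^n$ and $S_{n+1}(m)/m^n$ terms arise: the truncation $\sum_{k\ge1}k^n(1+u)^{-k}$ versus the polynomial residue identity leaves exactly $\sum_{a<m}a^n$. I will then organize the result by powers of $\dd=2p-1$, since $p$ enters only through $p+qt$, so the answer is polynomial in $\dd$ of degree $\le 2$ for the second moment.

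\textbf{Main obstacle.} The hardest step is the bookkeeping of $\omega''_t(1)$ and of the non-polynomial factors in $R(u)$, together with verifying that they really reduce to Faulhaber sums rather than leaving more complicated expressions. If the direct route gets messy, I would first check the final formulas at $p=1/2$ ($\dd=0$) and for small $n$, say $n=2,3$, against Theorem~\ref{thm:Law}. I would also use the substitution $u=(1-\omega)/\omega$-type changes of variable to turn the residue into a contour integral in $\omega$, where $\sum_k k^{n+j}\omega^k$ is explicit.
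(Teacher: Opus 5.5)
Your route is the paper's: differentiate the formula of Theorem~\ref{thm:GF-Descents} once and twice at $t=1$ under $[u^n]$, then finish with residue-type coefficient identities. But the proposal stops where the proof begins. The step you defer as the ``main obstacle'' is the whole argument, and you mispredict its outcome in a way that matters. If factors like $(2+u)^{-1}$ or $(1+pu)^{-1}$ survived in $R(u)$, the coefficient extraction would also see the poles at $u=-2$ or $u=-1/p$, and there would be no reason for the result to reduce to $m^{n+j}$, $S_n(m)$, $S_{n+1}(m)$.

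The missing fact is that these factors cancel and everything is a Laurent polynomial in $u$. Logarithmic differentiation of $\omega/(1+\omega)^2=t(1+u)/A_t(u)^2$ gives $\partial_t\log\omega_t|_{t=1}=\delta$, a constant. For $\Phi=A_t(1-\omega_t)/(1+\omega_t)$ one gets $\partial_t\log\Phi|_{t=1}=q-\delta/u$. This is clearest from the exact identity $\Phi^2=A_t(u)^2-4t(1+u)=(1-t)^2+2(1-t)(p-qt)u+(p+qt)^2u^2$, which equals $u^2$ at $t=1$; hence $\log(\Phi/u)$ expands in $t-1$ with coefficients polynomial in $u^{-1}$. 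It follows that $\partial_t(\Phi^{n+1}\omega_t^k)|_{t=1}=u^{n+1}(1+u)^{-k}\bigl((n+1)q+\delta k-(n+1)\delta/u\bigr)$. The second derivative is $u^{n+1}(1+u)^{-k}$ times a quadratic polynomial in $k$ whose coefficients are Laurent polynomials in $u$ with poles of order at most $2$ (the paper's $Q_0+\delta Q_1+\delta^2Q_2$). Until you establish this and carry out the extraction, neither displayed formula is derived. The proposal never produces the specific coefficients in the statement, and checking $n=2,3$ is not a proof.

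Your extraction mechanism also needs repair. You cannot evaluate $[u^{\ell-1}](1+u)^{m-1-k}$ term by term and sum over $k$. Term by term, $[u^{-1}](1+u)^{m-1-k}=0$ for every $k$, whereas the correct value of $[u^{-1}](1+u)^{m-1}\sum_{k\ge1}P(k)(1+u)^{-k}$ is $P(m)$. Likewise, term by term $[u^0]$ gives the divergent $\sum_k P(k)$. The series must be read as the rational function it represents. Split at $k=m$: the head $\sum_{k<m}P(k)(1+u)^{m-1-k}$ is a polynomial. For the tail, write $P(m-1+j)$ in the basis $\binom{j-1}{i}$ and use $\sum_{j\ge1}\binom{j-1}{i}w^{-j}=(w-1)^{-i-1}$ with $w=1+u$; this shows the tail contributes only negative powers of $u$. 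For $P(0)=0$ this gives $[u^{-1}]=P(m)$, $[u^0]=\sum_{a=0}^{m-1}P(a)$ and $[u^1]=\sum_{a=0}^{m-1}(m-1-a)P(a)$. These are exactly the three identities needed, given the pole orders above.

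Finally, your remark that $\omega_1\neq1$ ``causes no difficulty'' misreads the situation. Since $\omega_1(0)=1$ is the critical point of $\omega/(1+\omega)^2$, the factors $\omega_t$ and $\Phi$ are not analytic at $(t,u)=(1,0)$, and their $t$-derivatives acquire poles at $u=0$; for example, $\partial_t^2\log\omega_t|_{t=1}=(\delta^2-1)/u+(\delta^2-2\delta-1)/2$. These poles are precisely what generate the $S_n(m)$ and $S_{n+1}(m)$ terms. So differentiating under $[u^n]$ must be justified through the full product, not by treating the individual factors as power series in $u$.
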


\begin{proof}
Let $q=1-p$, and, for fixed $k$, set
\[
H_k(t,u):=
\left(A_t(u)\frac{1-\omega_t(u)}{1+\omega_t(u)}\right)^{n+1}\omega_t(u)^k.
\]
At $t=1$,
\[
A_1(u)=2+u,
\qquad
\omega_1(u)=\frac{1}{1+u},
\qquad
H_k(1,u)=u^{n+1}(1+u)^{-k}.
\]
Logarithmic differentiation of
\[
\frac{\omega_t(u)}{(1+\omega_t(u))^2}
=
\frac{t(1+u)}{A_t(u)^2}
\]
gives
\[
\left.\frac{\partial}{\partial t}\log \omega_t(u)\right|_{t=1}=\dd,
\qquad
\left.\frac{\partial}{\partial t}\log\left(A_t(u)\frac{1-\omega_t(u)}{1+\omega_t(u)}\right)\right|_{t=1}
=q-\frac{\dd}{u}.
\]
Hence
\[
\left.\frac{\partial}{\partial t}H_k(t,u)\right|_{t=1}
=u^{n+1}(1+u)^{-k}
\left((n+1)q+\dd k-\frac{(n+1)\dd}{u}\right).
\]

We shall use the following elementary coefficient identities. If $P$ is a polynomial with $P(0)=0$, then
\begin{align*}
[u^{-1}](1+u)^{m-1}\sum_{k\geq1}P(k)(1+u)^{-k}&=P(m),\\
[u^{0}](1+u)^{m-1}\sum_{k\geq1}P(k)(1+u)^{-k}&=\sum_{a=0}^{m-1}P(a),\\
[u^{1}](1+u)^{m-1}\sum_{k\geq1}P(k)(1+u)^{-k}&=\sum_{a=0}^{m-1}(m-1-a)P(a).
\end{align*}

Since $G_{n,m}^{(p)}(t)=\E[t^{D_n+1}]$, Theorem~\ref{thm:GF-Descents} and the first two coefficient identities, applied to the formula above for $\partial_tH_k$, give
\[
\frac{d}{dt}G_{n, m}^{(p)}(t)\big\vert_{t=1} = \E[D_n+1]
=(n+1)q+\dd m-\frac{(n+1)\dd}{m^n}\sum_{a=0}^{m-1}a^n.
\]
Subtracting $1$ and using $q=(1-\dd)/2$ proves \eqref{eq:mean-forward}.

For the second derivative, differentiating once more and simplifying gives
\[
\left.\frac{\partial^2}{\partial t^2}H_k(t,u)\right|_{t=1}
=
 u^{n+1}(1+u)^{-k}\bigl(Q_0(k,u)+\dd Q_1(k,u)+\dd^2Q_2(k,u)\bigr),
\]
where
\begin{align*}
Q_0(k,u)
&=-\frac{k}{2}+\frac{n(n+1)}{4}
+\frac{-k+n+1}{u}
+\frac{n+1}{u^2},\\
Q_1(k,u)
&=nk-\frac{n(n+1)}{2}-\frac{n(n+1)}{u},\\
Q_2(k,u)
&=k^2-\left(n+\frac12\right)k+\frac{n(n+1)}{4}
+\frac{-(2n+1)k+n^2-1}{u}
+\frac{n^2-1}{u^2}.
\end{align*}
Applying the three coefficient identities to the $u^0,u^{-1},u^{-2}$ parts of $Q_0,Q_1,Q_2$ gives the three displayed contributions in \eqref{eq:second-factorial-forward}. 

\end{proof}

When $p=q=1/2$, note that $\delta=0$. This simplifies the expressions for the mean and variance of the descents. We record it in the following corollary.
\begin{corollary}[Descents in symmetric shelf shuffle]
When $p=q=1/2$, we have 
\begin{align*}
\mathbb{E}[D_n] &= \frac{n-1}{2}, \\
\Var(D_n) &=\frac{n+1}{4}-\frac{m}{2}
+\frac{(n+1)mS_n(m)-(n+2)S_{n+1}(m)}{m^n}.
\end{align*}
\end{corollary}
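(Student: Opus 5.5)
This is a direct specialization of Corollary~\ref{cor:MomentsOfDescents} at $p=q=1/2$, so $\dd=2p-1=0$. The plan has two steps: read off the mean, then compute the variance from the second factorial moment.

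\textbf{Mean.} Set $\dd=0$ in \eqref{eq:mean-forward}. The whole bracket multiplied by $\dd$ vanishes, leaving $\E[D_n]=(n-1)/2$. As a sanity check, this also follows without any computation from the symmetry $\pi\mapsto w_0\pi$ or reversal. Reversal swaps ascents and descents, and when $p=1/2$ the law in Theorem~\ref{thm:Law} is symmetric in $\asc$ and $\des$, since $F_\pi$ is symmetric under $p\leftrightarrow q$ when $p=q$.

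\textbf{Second factorial moment.} Set $\dd=0$ in \eqref{eq:second-factorial-forward}. Only the $\dd$-free line survives:
\[
\E[D_n(D_n+1)]=\frac{n(n+1)}{4}-\frac m2+\frac{(n+1)mS_n(m)-(n+2)S_{n+1}(m)}{m^n}.
\]

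\textbf{Variance.} Write
\[
\Var(D_n)=\E[D_n(D_n+1)]-\E[D_n]-\E[D_n]^2 .
\]
Here
\[
\E[D_n]+\E[D_n]^2=\frac{n-1}{2}+\frac{(n-1)^2}{4}=\frac{(n-1)(n+1)}{4}=\frac{n^2-1}{4}.
\]
Subtracting this from $\frac{n(n+1)}{4}$ gives $\frac{n^2+n-n^2+1}{4}=\frac{n+1}{4}$. This is exactly the constant term in the stated formula, and the remaining terms are carried over unchanged.

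There is no real obstacle. The only step needing care is the bookkeeping that $\E[D_n(D_n+1)]$ is converted to the variance by subtracting $\E[D_n]+\E[D_n]^2$, not just $\E[D_n]^2$.
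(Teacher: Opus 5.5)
Your proof is correct and follows the paper's argument exactly: set $\dd=0$ in Corollary~\ref{cor:MomentsOfDescents}, then convert $\E[D_n(D_n+1)]$ to the variance by subtracting $\E[D_n]+\E[D_n]^2$. One caution on your optional sanity check: at $p=1/2$ the law depends on $\vl(\pi)$, so reversal works because it swaps $\asc$ and $\des$ \emph{and} preserves $\vl$, which you should say. By contrast, the complement $\pi_i\mapsto n+1-\pi_i$ (which is what $w_0\pi$ means under the usual right-to-left composition) swaps valleys with peaks and is not a symmetry of $\nu^{(1/2)}_{n,m}$.
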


Using the fact that $S_{n}(m)\sim \frac{m^{n+1}}{(n+1)}$, it is easy to see that when $p=1/2$, the variance of the descents satisfies 
\[
\frac{\Var(D_n)}{n} = \frac{1}{4} + O(m^2/n)\;.
\]
In particular, ${\Var(D_n)}/{n}\to 1/4$ if $m^2/n\to 0$. This observation was first made by Alexander Clay. On the other hand, let $\nu_{n, \infty}$ denote the uniform measure on $S_n$. It is well-known that 
\[\mathbb{E}_{\nu_{n, \infty}}[\des(\pi)]=\frac{n-1}{2}, \quad \text{and} \quad\Var_{\nu_{n, \infty}}[\des(\pi)] = {\frac{n+1}{12}}.\] 
When $m\approx n^{3/2}$, we know that $\nu_{n, m}^{(p)}$ is close to the uniform distribution $\nu_{n, \infty}$ for every $p\in (0, 1)$. One might, therefore, expect that if $m/n^{3/2}\to \infty$, then $\Var_{\nu_{n, m}^{(p)}}[D_n]/n \to 1/12$. This is indeed true, and it was also first noted by Clay. In the following theorem, we strengthen these observations by proving the asymptotics for $\Var(D_n)/n$ in the regime $m=o(n)$ and $n=o(m)$.

\begin{theorem}[Asymptotics of $\Var(D_n)$]
\label{thm:varianceAsymptotics}
Let $D_n$ be the number of descents of an $(m, p)$-shelf shuffle with $n$ cards. Put $q=1-p$. Then the following limits hold, uniformly in the choice of $p=p(n)$:
\begin{enumerate}
\item If $m/n\to0$, then
\[
\frac{\operatorname{Var}(D_n)}{n}-pq\longrightarrow0.
\]
In particular, for fixed $p$, $\operatorname{Var}(D_n)/n\to pq$.
\item If $m/n\to\infty$, then
\[
\frac{\operatorname{Var}(D_n)}{n}\longrightarrow \frac1{12}.
\]
\end{enumerate}
\end{theorem}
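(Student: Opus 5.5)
The plan is to work directly with the exact formulas of Corollary~\ref{cor:MomentsOfDescents}. I will write
\[
\Var(D_n)=\E[D_n(D_n+1)]-\E[D_n]-\E[D_n]^2,
\]
and organise the result as a polynomial in $\dd=2p-1$ of degree at most $2$. The coefficients will be expressed through the normalised Faulhaber quantities
\[
T_n:=\frac{S_n(m)}{m^n},\qquad T_{n+1}:=\frac{S_{n+1}(m)}{m^{n}} .
\]
Since $|\dd|\le 1$, it suffices to show that each of the three $\dd$-coefficients, divided by $n$, converges to the corresponding coefficient of the target. The targets are $pq=(1-\dd^2)/4$ in case (1) and $1/12$ (with no $\dd$-dependence) in case (2). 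This gives uniformity in $p=p(n)$ for free.

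\textbf{Regime $m/n\to\infty$.} Here I will use the Euler--Maclaurin (Bernoulli) expansion of the Faulhaber polynomial:
\[
S_j(m)=\frac{m^{j+1}}{j+1}-\frac{m^j}{2}+\frac{j\,m^{j-1}}{12}-\frac{j(j-1)(j-2)m^{j-3}}{720}+\cdots .
\]
The general term is $\binom{j}{k-1}B_k m^{j+1-k}/k$, so after dividing by $m^j$ the $k$-th term has size $O\bigl((Cn/m)^{k-1}\bigr)$ for $j\in\{n,n+1\}$. The tail is geometrically summable once $n/m$ is small. Substituting into the formulas:
\begin{itemize}
\item the mean's $\dd$-coefficient becomes $-n(n+1)/(12m)+O(n^3/m^2)$;
\item the $\dd^0$ part of $\E[D_n(D_n+1)]$ becomes $n(n+1)/4+(n+1)/12+O(n^2/m)$.
\end{itemize}
After subtracting $\E[D_n]+\E[D_n]^2$, the $\dd^0$ coefficient of the variance is $(n+1)/12+o(n)$. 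The $\dd^1$ and $\dd^2$ coefficients collapse to $O(n^2/m)+O(n^3/m^2)=o(n)$. This step should be routine bookkeeping, truncating the expansion after the $B_4$ term.

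\textbf{Regime $m/n\to 0$.} This is the main obstacle. The quantities $(n+1)T_n$, $n^2T_n$ and $n^2T_n^2$ are \emph{not} individually negligible. For example,
\[
T_n=\sum_{j\ge1}(1-j/m)^n\approx e^{-n/m},
\]
which may decay only like $1/n$ when $m\approx n/\log n$, so $n^2T_n$ can be of order $n$. Hence the variance must be simplified algebraically before estimating. I will substitute $S_{n+1}(m)=\sum_a a^{n+1}$ and write everything as sums over $a\in\{0,\ldots,m-1\}$ of $w_a:=(a/m)^n$ weighted by polynomials in $j=m-a$. For instance, $mS_n(m)-S_{n+1}(m)=m^n\sum_a j\,w_a$.

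The expected outcome is
\[
\Var(D_n)=pq(n+1)+R,
\]
where $R$ is a combination of $\sum_a j^k w_a$ for $k\le 2$ and $\bigl(n\sum_a w_a\bigr)^2$-type products. The key cancellation to verify is that the $n^2T_n$ terms coming from $\E[D_n(D_n+1)]$ and from $\E[D_n]^2$ cancel exactly, leaving terms that are $O(n)\cdot\sum_a j^k w_a$. These are then bounded by
\[
\sum_{j\ge1}j^k e^{-jn/m}=O\bigl((m/n)^{k}\,e^{-n/m}\bigr)+O(e^{-n/m}) ,
\]
which gives $R=o(n)$ since $e^{-n/m}\to0$.

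If the exact cancellation turns out to be imperfect, my fallback is to avoid the Faulhaber form entirely in this regime. Instead I would return to Theorem~\ref{thm:GF-Descents} near $t=1$ and argue that for $m\ll n$ the shuffle behaves, as far as descents are concerned, like independent top/bottom placements. This suggests a variance of $pq\,n$ plus boundary effects of size $O(m)$. The claimed limit $pq$ (rather than $1/4$) for $p=1/2$ is consistent with this heuristic, so I would use it to guide which terms must cancel.
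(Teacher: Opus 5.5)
Your treatment of the regime $m/n\to\infty$ follows the paper's route (Euler--Maclaurin for $S_n(m)$ and $S_{n+1}(m)$, then collecting powers of $\dd$). The regime $m/n\to0$, however, has a genuine gap.

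Put $t_\ell=(1-\ell/m)^n$ (your $w_{m-\ell}$), $A=\sum_{\ell=1}^m t_\ell$ (your $T_n$) and $B=\sum_{\ell=1}^m \ell t_\ell$. The algebra you propose gives exactly
\[
\Var(D_n)=pq(n+1)-\frac m2-mA+(n+2)B+\dd^2\Bigl(\frac m2+mA-B+(n+1)^2\bigl(B-A-A^2\bigr)\Bigr).
\]
The cancellation you anticipate does occur: in $(n+1)^2(B-A)=(n+1)^2\sum_\ell(\ell-1)t_\ell$ the $\ell=1$ term, of size $n^2e^{-n/m}$, has coefficient zero. But what survives is not of the form $O(n)\sum_a j^k w_a$. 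It is $(n+1)^2\bigl(\sum_\ell(\ell-1)t_\ell-A^2\bigr)$, and the two pieces are led by $n^2t_2$ and $n^2t_1^2$, each of order $n^2e^{-2n/m}$. That is $o(n)$ only if $2n/m-\log n\to\infty$. For $m=2n/\log n$ each piece is of order $n$, and for $m=n/\log\log n$ each is of order $n^2/(\log n)^2\gg n$. So neither your bound on $\sum_j j^k e^{-jn/m}$ nor a separate bound on the $(n\sum_a w_a)^2$ term can close the argument. The heuristic fallback is not a proof.

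What is missing is the second, multiplicative cancellation that the paper uses. Write $\sum_\ell(\ell-1)t_\ell=\sum_{a,b\ge1,\ a+b\le m}t_{a+b}$ and $A^2=\sum_{a,b=1}^m t_at_b$. Since $1-\frac{a+b}{m}\le(1-\frac am)(1-\frac bm)$, one has $t_{a+b}\le t_at_b$. The mean value theorem then gives
\[
0\le t_at_b-t_{a+b}\le \frac{nab}{m^2}\,e^{-(n-1)(a+b)/m}\qquad(a+b\le m),
\]
while the pairs with $a+b>m$ contribute at most $m^2e^{-n}$. With $\beta=(n-1)/m$ this yields
\[
0\le n\bigl(A^2+A-B\bigr)\le \Bigl(\frac nm\Bigr)^2\Bigl(\frac{e^{-\beta}}{(1-e^{-\beta})^2}\Bigr)^2+nm^2e^{-n}\longrightarrow 0.
\]
This is precisely what is needed, since the quantity enters with the factor $(n+1)^2$. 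The remaining terms $\frac m2$, $mA$, $(n+2)B$ are $o(n)$ by the exponential bounds you already have, and uniformity in $p$ follows from $|\dd|\le 1$.

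In the regime $m/n\to\infty$ there are two further points.

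\emph{A slip.} The $\dd^0$ part of $\E[D_n(D_n+1)]$ is $\frac{(n+1)(3n-2)}{12}+O(n^3/m^2)$, not $\frac{n(n+1)}4+\frac{n+1}{12}$; the latter would give variance $\frac{n+1}{3}$.

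\emph{A caution about truncation.} The $\dd^2$ coefficient is again a difference of large terms. For example, $mA-B=\frac{m^2}{n+2}+O(m)$ is cancelled by $(n+1)^2(B-A-A^2)=-\frac{m^2}{n+2}+O(n)$. At the next order, the $n^6/m^4$ contributions cancel as well. Truncating after the $B_4$ term and bounding the tail termwise therefore shows at best that this coefficient is $O(n^3/m^2+n^6/m^4)$. That is $o(n)$ only for $m\gg n^{5/4}$. To cover all $m/n\to\infty$ you must track this cancellation rather than bound it away. The paper's stated remainder $O(n^5/m^4)$ implicitly presupposes the same cancellation.
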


\begin{proof}
Put $\delta=2p-1$. For $1\leq \ell\leq m$, define
\[
t_\ell=\left(1-\frac{\ell}{m}\right)^n,
\qquad
A_{n,m}=\sum_{\ell=1}^m t_\ell,
\qquad
B_{n,m}=\sum_{\ell=1}^m \ell t_\ell\textcolor{blue}{.}
\]
The exact variance formula may be written in the form
\begin{equation}\label{eq:variance-decomposition-snippet}
\operatorname{Var}(D_n)=V_{n,m}^{(0)}+\delta^2 W_{n,m},
\end{equation}
where
\begin{align*}
V_{n,m}^{(0)}
&=
\frac{n+1-2m}{4}-mA_{n,m}+(n+2)B_{n,m},\\
W_{n,m}
&=
-\frac{n+1-2m}{4}+mA_{n,m}-B_{n,m}
+(n+1)^2\bigl(B_{n,m}-A_{n,m}-A_{n,m}^2\bigr).
\end{align*}
Indeed, this is obtained from the exact formula for $\operatorname{Var}(D_n)$ by the change of variables $a=m-\ell$ and collecting the terms that are independent of $\delta^2$ and proportional to $\delta^2$.

First assume $m/n\to0$, and set $\alpha=n/m$. Since $t_\ell\leq e^{-\alpha \ell}$, we have
\[
0\leq A_{n,m}\leq \frac{e^{-\alpha}}{1-e^{-\alpha}}\to0,
\qquad
0\leq B_{n,m}\leq \frac{e^{-\alpha}}{(1-e^{-\alpha})^2}\to0.
\]
It remains only to control the term $B_{n,m}-A_{n,m}-A_{n,m}^2$, because it is multiplied by $(n+1)^2$ in \eqref{eq:variance-decomposition-snippet}. Observe that
\[
B_{n,m}-A_{n,m}=\sum_{\ell=1}^m(\ell-1)t_\ell
=\sum_{\substack{a,b\geq1\\ a+b\leq m}}t_{a+b}.
\]
For $a+b\leq m$, one has $t_{a+b}\leq t_at_b$. Hence, if
\[
C_{n,m}:=B_{n,m}-A_{n,m}-A_{n,m}^2,
\]
then $C_{n,m}\leq0$. Moreover, by the mean-value theorem, for $a+b\leq m$,
\begin{align*}
0\leq t_at_b-t_{a+b}
&=\left(1-\frac{a}{m}\right)^n\left(1-\frac{b}{m}\right)^n
-\left(1-\frac{a+b}{m}\right)^n\\
&\leq \frac{nab}{m^2}\exp\left(-\frac{(n-1)(a+b)}{m}\right).
\end{align*}
The pairs with $a+b>m$ contribute at most $m^2e^{-n}$. Therefore, with $\beta=(n-1)/m$,
\begin{align*}
0\leq -C_{n,m}
&\leq
\frac{n}{m^2}\left(\sum_{a\geq1}ae^{-\beta a}\right)^2+m^2e^{-n}.
\end{align*}
Multiplying by $n$ gives
\[
0\leq n(-C_{n,m})
\leq
\left(\frac{n}{m}\right)^2
\left(\frac{e^{-\beta}}{(1-e^{-\beta})^2}\right)^2
+nm^2e^{-n}\longrightarrow0,
\]
because $\beta\sim n/m\to\infty$ and $m=o(n)$. Thus $nC_{n,m}\to0$. It follows that
\[
\frac{V_{n,m}^{(0)}}{n}\to \frac14,
\qquad
\frac{W_{n,m}}{n}\to -\frac14.
\]
Using \eqref{eq:variance-decomposition-snippet},
\[
\frac{\operatorname{Var}(D_n)}{n}
\to \frac14-\frac{\delta^2}{4}
=\frac{1-(2p-1)^2}{4}=pq,
\]
or, if $p=p(n)$ varies, equivalently
$\operatorname{Var}(D_n)/n-pq\to0$.

Now assume $m/n\to\infty$. Let
\[
S_r(m)=\sum_{j=0}^{m-1}j^r.
\]
Then
\[
A_{n,m}=\frac{S_n(m)}{m^n},
\qquad
B_{n,m}=\frac{mS_n(m)-S_{n+1}(m)}{m^n}.
\]
The Euler--Maclaurin expansion for power sums, uniformly in the range $r/m\to0$, gives
\[
S_r(m)=\frac{m^{r+1}}{r+1}-\frac{m^r}{2}+\frac{r m^{r-1}}{12}
-\frac{r(r-1)(r-2)m^{r-3}}{720}
+O\left(r^5m^{r-5}\right).
\]
Substituting this expansion with $r=n$ and $r=n+1$ in the preceding expressions for $A_{n,m}$ and $B_{n,m}$, and then simplifying in \eqref{eq:variance-decomposition-snippet}, yields
\begin{align*}
V_{n,m}^{(0)}
&=\frac{n+1}{12}+\frac{n(n^2-1)}{180m^2}
+O\left(\frac{n^5}{m^4}\right),\\
W_{n,m}
&=-\frac{n(n+1)(6n-1)}{360m^2}
+O\left(\frac{n^5}{m^4}\right).
\end{align*}
Since $|\delta|\leq1$, \eqref{eq:variance-decomposition-snippet} implies
\[
\frac{\operatorname{Var}(D_n)}{n}
=\frac{n+1}{12n}
+O\left(\frac{n^2}{m^2}+\frac{n^4}{m^4}\right).
\]
The error tends to zero because $m/n\to\infty$, and hence
\[
\frac{\operatorname{Var}(D_n)}{n}\to\frac1{12}.
\]
\end{proof}

\subsection{Number of inversions}
\label{sec:Inversions}

In this section, we study the number of inversions in the shelf shuffle. For a permutation $\pi\in S_n$, we define
\[
\operatorname{Inv}(\pi):=\{(i,j):1\leq i<j\leq n,\ \pi_i>\pi_j\}, \qquad \operatorname{inv}(\pi) = |\operatorname{Inv}(\pi)|\;.
\]
It is easy to see that for any permutation $\pi\in S_n$, we have $\inv(\pi)=\inv(\pi^{-1})$. We first record a useful inverse description of the asymmetric shelf shuffle. We then obtain the generating function identity for $\inv$ and compute the mean and the variance for the number of inversions in $\pi\sim\nu_{n, m}^{(p)}$.

In this section and the following sections, it will be more convenient to work with an alternate description of the shelf-shuffle that we describe below. For $m\geq 1$, let
\[
\mathcal A_m:=\{1^+,1^-,2^+,2^-,\ldots,m^+,m^-\},\]
and we equip $\mathcal{A}_m$ with the following total order
\[
1^+<1^-<2^+<2^-<\cdots<m^+<m^-.
\]
For every $1\leq j\leq m$, we assign the weight $\wt(j^+)=p/m$ and $\wt(j^-)=q/m$.  For a word $a=(a_1,\ldots,a_n)\in \mathcal A_m^n$, define $\operatorname{std}_{\pm}(a)\in S_n$ by ranking the positions $1,\ldots,n$ in the order of their letters, breaking ties among equal $j^+$ letters in increasing order of positions and ties among equal $j^-$ letters in decreasing order of positions.

\begin{lemma}
\label{lem:mixed-standardization}
Let $a_1,\ldots,a_n$ be independent letters of $\mathcal A_m$ with the weight $\wt$ as above. Let $\pi\sim \nu^{(p)}_{n,m}$.  Then
\[
\pi^{-1}\stackrel{d}{=}\operatorname{std}_{\pm}(a).
\]
Consequently, $\operatorname{inv}(\pi)=\operatorname{inv}(\operatorname{std}_{\pm}(a))$ in distribution.
\end{lemma}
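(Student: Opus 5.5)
The plan is a direct coupling: realize the $(m,p)$-shelf shuffle and the random word $a$ on one probability space so that $\operatorname{std}_{\pm}(a)$ is \emph{exactly} $\pi^{-1}$, not merely equal in law. I will fix the conventions as follows. Positions in the deck are numbered $1,\ldots,n$ from top to bottom, and $\pi_k$ is the label of the card occupying position $k$ after the shuffle. The initial deck has card $i$ at position $i$. With these conventions, $\pi^{-1}(i)$ is the final position of card $i$. Since the deck is dealt from the bottom, card $i$ is the $(n+1-i)$-th card dealt.

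\textbf{Step 1 (the coupling).} For each card $i$, let $J_i\in[m]$ be its shelf and $\varepsilon_i\in\{+,-\}$ record whether it is placed on top ($+$, probability $p$) or at the bottom ($-$, probability $q$). Set $a_i:=J_i^{\varepsilon_i}$. The pairs $(J_i,\varepsilon_i)$ are independent and each is uniform-on-shelves times Bernoulli$(p)$. Hence $a_1,\ldots,a_n$ are independent letters of $\mathcal A_m$ with $\Pbb(a_i=j^+)=p/m$ and $\Pbb(a_i=j^-)=q/m$, which is the law in the statement.

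\textbf{Step 2 (final order, the main step).} I will show that the final top-to-bottom order of the deck is exactly the order produced by $\operatorname{std}_{\pm}(a)$.

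1. Piles are joined in shelf order, with shelf $1$ on top. So every card on shelf $j$ lies above every card on shelf $j'$ when $j<j'$.
2. Within shelf $j$, every card placed on top ends up above every card placed at the bottom. This matches $j^+<j^-$.
3. Among the top-placed cards of shelf $j$, a card dealt later lies higher. Later dealing means smaller label $i$, so these cards appear in \emph{increasing} order of $i$ from top to bottom. This is the tie-breaking rule for $j^+$.
4. Among the bottom-placed cards of shelf $j$, a card dealt later lies lower. So these cards appear in \emph{decreasing} order of $i$ from top to bottom. This is the tie-breaking rule for $j^-$.

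Together, these show that the final position of card $i$ equals the rank of position $i$ under the ranking that defines $\operatorname{std}_{\pm}(a)$. In other words, $\operatorname{std}_{\pm}(a)(i)=\pi^{-1}(i)$ for every $i$.

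\textbf{Step 3 (conclusion).} Because the coupling gives pointwise equality, $\pi^{-1}\stackrel{d}{=}\operatorname{std}_{\pm}(a)$ follows. The statement about inversions then follows from $\inv(\pi)=\inv(\pi^{-1})$.

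The main obstacle is bookkeeping of conventions. These are the direction of dealing, whether positions are counted from the top, and which of $\pi,\pi^{-1}$ sends positions to cards. The only real content of the proof is the within-shelf ordering in items 3 and 4 of Step 2.

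I would check these conventions against Theorem~\ref{thm:Law} on $n=2$, $m=1$. There, $\nu^{(p)}_{2,1}$ gives the identity with probability $\frac{1}{1}[s^2](1+s)(1+ps)=p$. In the coupling with $m=1$, the first card dealt is card $2$ and its sign is irrelevant, so the outcome is the identity exactly when card $1$ goes on top, with probability $p$. The standardization agrees: both $a_1=1^-,a_2=1^+$ and $a_1=a_2=1^+$ give the identity, with total probability $qp+p^2=p$. If the paper's convention for $\pi$ were the opposite one, the same argument would apply with the roles of $\pi$ and $\pi^{-1}$ swapped, and the tie-breaking rule would be adjusted to match.
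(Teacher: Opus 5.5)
Your coupling argument is correct and is essentially the paper's proof: encode each card by its shelf and top/bottom choice, then read off the final order from three facts. Shelves come in shelf order, top-placed cards sit above bottom-placed cards, and the top-placed (resp.\ bottom-placed) cards appear in increasing (resp.\ decreasing) label order, so that card $i$'s final position is the mixed-standardization rank of $i$. One slip in your sanity check: $a_1=1^-,a_2=1^+$ gives $\operatorname{std}_{\pm}(a)=21$, not the identity (the identity arises exactly when $a_1=1^+$, with $a_2$ arbitrary), and since every permutation in $S_2$ is an involution the $n=2$ check cannot detect a $\pi$ versus $\pi^{-1}$ mix-up anyway---the case $n=3$, $m=1$ does, since Theorem~\ref{thm:Law} gives $231$ probability $pq$ and $312$ probability $0$, confirming your convention.
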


\begin{proof}
Encode a card by $j^+$ if it is assigned shelf $j$ and placed on top of that shelf, and by $j^-$ if it is assigned shelf $j$ and placed on the bottom.  The shelves are assembled in increasing order.  Within a fixed shelf, all top-placed cards appear above all bottom-placed cards; since cards are dealt from the bottom of the original deck, the top-placed cards appear in increasing order of their labels, while the bottom-placed cards appear in decreasing order of their labels.  Thus, the final position of card $i$ is exactly the rank assigned to position $i$ by the mixed standardization rule.  This proves the first assertion.  The second follows because the inversion number is invariant under taking inverses.
\end{proof}

For $r\geq0$, we write $(a;t)_r:=\prod_{j=0}^{r-1}(1-at^j)$ and $(a;t)_\infty:=\prod_{j\geq0}(1-at^j)$, as a formal power series.  We also write
\[
\left[\begin{matrix} n\\ c_1,\ldots,c_{\ell}\end{matrix}\right]_t:=\frac{(t;t)_n}{(t;t)_{c_1}\cdots (t;t)_{c_{\ell}}} = \frac{\prod_{j=1}^{n}(1-t^j)}{\prod_{k=1}^{\ell}\prod_{j=1}^{c_{k}}(1-t^{j})}\;
\]
for the $t$-multinomial coefficient.

Finally, we define the set 
\[
\mathcal{AB}(m, n) :=\left\{(a_1, b_1, \ldots, a_m, b_m): a_i\geq 0, b_i\geq 0 \text{ for all } i, \sum_{i=1}^{m}(a_i+b_i)=n\right\}\;.
\]
We will denote the elements in $\mathcal{AB}(m, n)$ by $({\bf a}, {\bf b})$ and we will write $|{\bf a}|=\sum_{i=1}^{m}a_i$ and $|{\bf b}|=\sum_{i=1}^{m}b_i$.  We also define
\[
\left[\begin{matrix} n\\ {\bf (a, b)}\end{matrix}\right]_t := \left[\begin{matrix} n\\ a_1, b_1, \ldots,a_{m}, b_m\end{matrix}\right]_t\;.
\]

\begin{theorem}[Generating function for inversions]
\label{thm:inv-gf}
Let $\pi\sim\nu^{(p)}_{n,m}$.  Then
\begin{align}
\E\left[t^{\operatorname{inv}(\pi)}\right]
&=
\sum_{{\bf (a, b)}\in \mathcal{AB}(m, n)}
\left[\begin{matrix} n\\ {\bf (a, b)}\end{matrix}\right]_t
\cdot \prod_{j=1}^{m}t^{\binom{b_j}{2}}\cdot 
\left(\frac{p}{m}\right)^{|{\bf a}|}\left(\frac{q}{m}\right)^{|{\bf b}|}
\label{eq:inv-gf-finite}\\
&=\frac{(t;t)_n}{m^n}[z^n]
\left(\frac{(-qz;t)_\infty}{(pz;t)_\infty}\right)^m.
\label{eq:inv-gf-product}
\end{align}
\end{theorem}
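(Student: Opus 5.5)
By Lemma~\ref{lem:mixed-standardization} it suffices to compute $\E[t^{\inv(\operatorname{std}_{\pm}(a))}]$ for a word $a$ of independent letters with weights $\wt$. The plan is to first condition on the content of the word, then sum over contents. The content is the vector $({\bf a},{\bf b})\in\mathcal{AB}(m,n)$, where $a_j$ is the number of letters equal to $j^+$ and $b_j$ the number equal to $j^-$.

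\emph{Fixed content.} Every word with content $({\bf a},{\bf b})$ has probability $(p/m)^{|{\bf a}|}(q/m)^{|{\bf b}|}$. So we need $\sum_{a}t^{\inv(\operatorname{std}_{\pm}(a))}$ over words with that content. Split the inversions of $\operatorname{std}_{\pm}(a)$ into two kinds.
\begin{itemize}
\item[(i)] Pairs of positions $i<k$ carrying distinct letters with $a_i>a_k$ in the total order on $\mathcal A_m$. These are exactly the inversions of the word $a$ over the $2m$-letter ordered alphabet.
\item[(ii)] Pairs carrying the same letter. These contribute nothing for $j^+$, which is ranked increasingly, and contribute all $\binom{b_j}{2}$ pairs for $j^-$, which is ranked decreasingly.
\end{itemize}
Hence $\inv(\operatorname{std}_{\pm}(a))=\inv(a)+\sum_j\binom{b_j}{2}$, where $\inv(a)$ counts strict inversions. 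The classical MacMahon result says that the inversion enumerator of multiset permutations is the $t$-multinomial coefficient. This gives $\sum_a t^{\inv(a)}=\left[\begin{smallmatrix} n\\ ({\bf a},{\bf b})\end{smallmatrix}\right]_t$. Multiplying by the weight and summing over contents yields \eqref{eq:inv-gf-finite}.

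\emph{Product form.} Write $\left[\begin{smallmatrix} n\\ ({\bf a},{\bf b})\end{smallmatrix}\right]_t=(t;t)_n\prod_j \frac{1}{(t;t)_{a_j}(t;t)_{b_j}}$. Then \eqref{eq:inv-gf-finite} equals $(t;t)_n m^{-n}$ times the coefficient of $z^n$ in
\[
\prod_{j=1}^m\Bigl(\sum_{a\ge0}\frac{(pz)^a}{(t;t)_a}\Bigr)\Bigl(\sum_{b\ge0}\frac{t^{\binom b2}(qz)^b}{(t;t)_b}\Bigr).
\]
Apply Euler's two $q$-exponential identities:
\[
\sum_{a\ge0}\frac{x^a}{(t;t)_a}=\frac{1}{(x;t)_\infty},\qquad \sum_{b\ge0}\frac{t^{\binom b2}x^b}{(t;t)_b}=(-x;t)_\infty.
\]
With these, each factor becomes $(-qz;t)_\infty/(pz;t)_\infty$, which gives \eqref{eq:inv-gf-product}.

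The main point needing care is the tie-breaking bookkeeping in step (ii). One must check that ties between $j^+$ and $j^-$ count as distinct letters, so they fall under case (i) via the order $j^+<j^-$. One must also check that no extra inversions are created between equal letters beyond the $\binom{b_j}{2}$ reversed pairs. The remaining steps are standard identities.
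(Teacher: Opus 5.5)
Your proposal is correct and follows the paper's own proof essentially step for step. Both condition on the content $({\bf a},{\bf b})$, use $\inv(\operatorname{std}_{\pm}(a))=\inv(a)+\sum_j\binom{b_j}{2}$ together with MacMahon's $t$-multinomial inversion enumerator to get the finite sum, and then apply Euler's two $t$-exponential identities to obtain the product form; your explicit check of the tie-breaking ($j^+$ versus $j^-$ handled by the strict order, equal $j^-$ letters giving exactly $\binom{b_j}{2}$ reversed pairs) just spells out what the paper states in one line.
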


\begin{proof}
We will work with the labelled word $a$ from Lemma~\ref{lem:mixed-standardization}.  Suppose that a realization of $a$ is given by $a_j$ copies of $j^+$ and $b_j$ copies of $j^-$, $1\leq j\leq m$.  The inversions in $\operatorname{std}_{\pm}(a)$ coming from unequal labels are precisely the inversions of the word $a\in \mathcal A_m^n$.  Equal $j^+$ labels contribute no inversions, while equal $j^-$ labels contribute $\binom{b_j}{2}$ inversions. The inversion enumerator of words with this fixed realization is, therefore, the $t$-multinomial coefficient, giving \eqref{eq:inv-gf-finite}.  Finally, \eqref{eq:inv-gf-product} follows from the standard $t$-binomial identities (see~\cite[Eqs. (1.3.15), (1.3.16)]{Gasper04})
\[
\sum_{a\geq0}\frac{x^a}{(t;t)_a}=\frac{1}{(x;t)_\infty},
\qquad
\sum_{b\geq0}\frac{t^{\binom{b}{2}}x^b}{(t;t)_b}=(-x;t)_\infty.
\]
\end{proof}

\begin{corollary}[Mean and variance for inversions]
\label{cor:inv-moments}
Fix $n, m \geq 1$ and $p\in (0, 1)$.
Let $\pi\sim\nu^{(p)}_{n,m}$ and set $I_n=\operatorname{inv}(\pi)$ and $\dd=2p-1$.  Then,
\[
\E[I_n]=\binom{n}{2}\left(\frac12-\frac{\dd}{2m}\right),
\]
and
\[
\Var(I_n)=\frac{n(n-1)(2n+5)}{72}
+\frac{n(n-1)(n-2)}{18m^2}
-\frac{\dd^2 n(n-1)(2n-1)}{24m^2}.
\]
\end{corollary}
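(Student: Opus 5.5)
Rather than differentiate the $q$-product \eqref{eq:inv-gf-product} twice, we will work directly with the word model of Lemma~\ref{lem:mixed-standardization}: $\operatorname{inv}(\pi)$ has the same law as $\operatorname{inv}(\operatorname{std}_{\pm}(a))$, where $a_1,\ldots,a_n$ are i.i.d.\ letters of $\mathcal A_m$ with weights $\wt$. We write
\[
I_n=\sum_{1\leq i<j\leq n}Y_{ij},
\]
where $Y_{ij}$ is the indicator that positions $i<j$ form an inversion of $\operatorname{std}_{\pm}(a)$. By the tie-breaking rule, $Y_{ij}=1$ exactly when $a_i>a_j$ in the total order, or when $a_i=a_j=k^-$ for some $k$. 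Since $Y_{ij}$ depends only on $(a_i,a_j)$, the first two moments reduce to computations with at most three or four letters.

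\textbf{Mean.} For a single pair, the probability that $a_i\neq a_j$ is $1-\sum_x\wt(x)^2=1-(p^2+q^2)/m$, and by symmetry half of this mass has $a_i>a_j$. The tie $a_i=a_j=k^-$ has total probability $q^2/m$. Hence
\[
\E Y_{ij}=\frac12-\frac{p^2-q^2}{2m}=\frac12-\frac{\dd}{2m},
\]
using $p^2-q^2=p-q=\dd$. Summing over the $\binom n2$ pairs gives the stated mean.

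\textbf{Variance.} We expand $\Var(I_n)=\sum\Var(Y_{ij})+\sum\operatorname{Cov}(Y_{ij},Y_{kl})$ and sort the pairs of index pairs by how many indices they share.
\begin{itemize}
\item Disjoint pairs are independent, so their covariances vanish.
\item Identical pairs contribute $\binom n2\mu(1-\mu)$, where $\mu=\frac12-\frac{\dd}{2m}$.
\item Pairs sharing exactly one index come in three shapes for $i<j<k$: $(ij,ik)$, $(ik,jk)$ and $(ij,jk)$. Each of the first two shapes occurs $\binom n3$ times with a positive sign, and the third occurs $\binom n3$ times. With the factor $2$ for ordered pairs, their total contribution is $2\binom n3\bigl(c_1+c_2+c_3\bigr)$, where $c_s=\Pbb(\text{both})-\mu^2$ for the corresponding shape.
\end{itemize}

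So the main work is to compute $\Pbb(Y_{ij}Y_{ik}=1)$, $\Pbb(Y_{ik}Y_{jk}=1)$ and $\Pbb(Y_{ij}Y_{jk}=1)$ for three i.i.d.\ letters. We plan to condition on the shared letter. For $(ij,ik)$, the shared letter is $a_i$; we need $\E\bigl[g(a_i)^2\bigr]$, where $g(x)=\Pbb(a_j<x)+\1\{x=k^-\}\wt(k^-)$, i.e.\ the probability that a fresh letter is strictly below $x$, plus the tie mass when $x$ is a minus letter. The shape $(ik,jk)$ is handled the same way with the analogous upper function $h$. For the chain $(ij,jk)$ we need $\E\bigl[h(a_j)g(a_j)\bigr]$, which amounts to requiring $a_i\geq a_j\geq a_k$ with the appropriate strictness.

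Evaluating these expressions requires the sums $\sum_x \Pbb(<x)^2\wt(x)$ and $\sum_x\wt(x)^3$-type terms over the alternating sequence $1^+,1^-,\ldots$. These are sums of quadratics in the shelf index, so they are elementary but messy. We will organize them via $\Pbb(<j^+)=(j-1)/m$ and $\Pbb(<j^-)=(j-1+p)/m$, and then collect powers of $\dd$. This bookkeeping is the main obstacle.

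We expect the $1/m$ terms to cancel and leave only $m^{-2}$ corrections. As checks on the computation:
\begin{itemize}
\item as $m\to\infty$ the variance should reduce to the uniform value $n(n-1)(2n+5)/72$;
\item at $n=2$ it should equal $\mu(1-\mu)$.
\end{itemize}
If the hand computation becomes unwieldy, we will instead take $\log$ of \eqref{eq:inv-gf-product}, expanded around $t=1$, as an independent cross-check.
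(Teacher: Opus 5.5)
Your proposal is correct, and it takes a genuinely different route from the paper.

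\textbf{The paper's route.} It starts from the finite form \eqref{eq:inv-gf-finite} and conditions on the multinomial content vector $C=(A_1,B_1,\ldots,A_m,B_m)$. The conditional mean and variance of $I_n$ are read off as $\mathcal{D}\log$ and $\mathcal{D}^2\log$ of the normalized $t$-multinomial times $t^{K(\mathbf b)}$, which reduce to variances of discrete uniforms. It then applies the law of total variance, evaluating everything with multinomial factorial moments. In that route, $1/m$ terms appear in both $\E[\mathcal D^2\log R_C]$ and $\Var(U(C))/16$, and they cancel.

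\textbf{Your route.} You decompose $I_n$ into pair indicators on the i.i.d.\ word and need only two- and three-letter probabilities. No $t$-calculus or generating function is required, and no $1/m$ terms ever arise.

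\textbf{Closing the computation.} The bookkeeping you flag as the main obstacle is short. One has $g(j^+)=(j-1)/m$, $g(j^-)=j/m$ and $h(j^{+})=h(j^{-})=(m-j+q)/m$. Let $J$ be the uniform shelf index of a letter and $\eta$ the independent Bernoulli$(q)$ indicator that it is a minus letter. Then $g=(J-1+\eta)/m$ and $h=(m-J+q)/m$. This gives $c_1=\bigl(\tfrac{m^2-1}{12}+pq\bigr)/m^2$ and $c_2=-c_3=\tfrac{m^2-1}{12m^2}$, so
\[
\Var(I_n)=\binom n2\Bigl(\frac14-\frac{\dd^2}{4m^2}\Bigr)+2\binom n3\Bigl(\frac1{12}+\frac{2-3\dd^2}{12m^2}\Bigr).
\]
Using $pq=(1-\dd^2)/4$, this simplifies exactly to the stated formula.

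\textbf{What each route buys.} Yours is more elementary and transparent. The $m^{-2}$ corrections come visibly from the mean shift, the discreteness of $J$, and the Bernoulli tie-breaking variance. It is also the natural starting point for higher moments or a CLT via Hoeffding projections. The paper's route reuses the generating function of Theorem~\ref{thm:inv-gf} and the known moments of $t$-factorials, so the whole computation becomes mechanical multinomial algebra.
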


\begin{proof}
Let $F_n(t):=\E[t^{I_n}]$ be the generating function in Theorem~\ref{thm:inv-gf}. Define $\mathcal D=t\frac{d}{dt}$.  Since $\mathcal D t^r=rt^r$, we have
\[
\mathcal D F_n(1)=\E[I_n],
\qquad
\mathcal D^2F_n(1)=\E[I_n^2].
\]
We will differentiate~\eqref{eq:inv-gf-finite}.  Fix a ${\bf (a, b)}\in \mathcal{AB}(m, n)$ and write $K({\bf b}):=\sum_{j=1}^m\binom{b_j}{2}$.
For ${\bf (a, b)}$, write
\[
R_{\bf (a, b)}(t) := 
\left[\begin{matrix} n\\ a_1,b_1,\ldots,a_m,b_m\end{matrix}\right]_1^{-1}
\left[\begin{matrix} n\\ a_1,b_1,\ldots,a_m,b_m\end{matrix}\right]_t
t^{K({\bf b})}.
\]
Note that $R_{\bf (a, b)}(1)=1$. Let $C=(A_1,B_1,\ldots,A_m,B_m)$ bedrawn from the multinomial distribution with parameters $(p/m, q/m, \ldots, p/m, q/m)$. Then,
\[
\E[I_n]= \E\left[\left.\mathcal D\log R_C(t)\right|_{t=1}\right],
\]
and
\[
\Var(I_n)=\E\left[\left.\mathcal D^2\log R_C(t)\right|_{t=1}\right]+
\Var\left(\left.\mathcal D\log R_C(t)\right|_{t=1}\right).
\]
We will now compute {$\mathcal D\log R_{\bf (a, b)}(t)$} and {$\mathcal D^2\log R_{\bf (a, b)}(t)$}. To this end, for $r\geq0$, we write
\[
[r]_t!:=\prod_{s=1}^{r}(1+t+\cdots+t^{s-1}).
\]
Since, after division by $s$, the polynomial $1+t+\cdots+t^{s-1}$ is the probability generating function of the uniform distribution on $\{0,\ldots,s-1\}$, we get
\[
\left.\mathcal D\log [r]_t!\right|_{t=1}
=
\frac{r(r-1)}{4},
\qquad
\left.\mathcal D^2\log [r]_t!\right|_{t=1}
=
\frac{r(r-1)(2r+5)}{72}.
\]
Using this and the fact that
\[
\left[\begin{matrix} n\\ a_1,b_1,\ldots,a_m,b_m\end{matrix}\right]_t
=
\frac{[n]_t!}{[a_1]_t![b_1]_t!\cdots [a_m]_t![b_m]_t!},
\]
we conclude that
\[
\left.\mathcal D\log R_{\bf (a, b)}(t)\right|_{t=1}
=
{K({\bf b})}+\frac14\left(n^2-\sum_{j=1}^m(a_j^2+b_j^2)\right) = \frac{n(n-1)}{4}+\frac{U({\bf (a, b)})}{4},
\]
where
\[
U({\bf (a, b)}):=\sum_{j=1}^m\left(b_j(b_j-1)-a_j(a_j-1)\right)\;.
\]
Similarly, we also get
\begin{align*}
\left.\mathcal D^2\log R_{\bf (a, b)}(t)\right|_{t=1}
&=\frac{1}{72}\bigg(n(n-1)(2n+5)
-\sum_{j=1}^m a_j(a_j-1)(2a_j+5)\\
&\hspace{3.4cm}
-\sum_{j=1}^m b_j(b_j-1)(2b_j+5)\bigg).
\end{align*}
We now use the standard multinomial factorial-moment identity
\[
\E\left[\prod_i C_i^{\underline{r_i}}\right]
=
n^{\underline{r_1+r_2+\cdots}}
\prod_i \rho_i^{r_i},
\]
where the $C_i$ are multinomial cell counts with cell probabilities $\rho_i$, and
$x^{\underline r}=x(x-1)\cdots(x-r+1)$.  First,
\[
\E[U(C)]
=
n(n-1)\left(m\frac{q^2}{m^2}-m\frac{p^2}{m^2}\right)
=
-\frac{\dd n(n-1)}{m}.
\]
Hence
\[
\E[I_n]
=
\frac{n(n-1)}4-\frac{\dd n(n-1)}{4m}
=
\binom{n}{2}\left(\frac12-\frac{\dd}{2m}\right).
\]
For the variance, we use
\[
x(x-1)(2x+5)=2x^{\underline3}+9x^{\underline2},
\qquad
\left(x^{\underline2}\right)^2=x^{\underline4}+4x^{\underline3}+2x^{\underline2}
\]
to write
\[
\E\left[\left.\mathcal D^2\log R_C(t)\right|_{t=1}\right]
=
\frac{n(n-1)(2n+5)}{72}
-\frac{n(n-1)(p^2+q^2)}{8m}
-\frac{n(n-1)(n-2)(p^3+q^3)}{36m^2},
\]
and
\[
\Var(U(C))
=
\frac{2n(n-1)(p^2+q^2)}{m}
+
\frac{4n(n-1)(n-2)(p^3+q^3)}{m^2}
-
\frac{2n(n-1)(2n-3)\dd^2}{m^2}.
\]
Therefore
\begin{align*}
\Var(I_n)
&=
\frac{n(n-1)(2n+5)}{72}
+
\frac{2n(n-1)(n-2)(p^3+q^3)}{9m^2}
-
\frac{n(n-1)(2n-3)\dd^2}{8m^2}.
\end{align*}
We get the desired formula by noting that $p^3+q^3={(1+3\dd^2)}/{4}$.
\end{proof}

\subsection{Distribution of RSK shape and inverse descents}
\label{sec:RSK-and-inv-descent}
We now study the distribution of {Robinson--Schensted--Knuth} (RSK) shape and the distribution of descents in $\pi^{-1}$, where $\pi\sim \nu_{n, m}^{(p)}$. For $p=1/2$, this was carried out in~\cite[Section 3.5, Section 3.6]{DiaconisFulmanHolmes2013}.

We recall that the RSK correspondence gives a bijection between $S_n$ and the set of pairs of $(P, Q)$ of standard Young tableaux of the same shape and size $n$. For the details of this bijection and more generally, for the background, we refer the reader to~\cite{Stan24}. For a permutation $\sigma$, write $\operatorname{sh}(\sigma)$ for its RSK shape and write $\operatorname{RSK}(\sigma)=(P(\sigma),Q(\sigma))$. The tableau $P(\sigma)$ is called the insertion tableau, and $Q(\sigma)$ is called the recording tableau of the permutation $\sigma$.

For a partition $\lambda$, let $\operatorname{SSYT}^{\pm}_{m}(\lambda)$ be the set of fillings of the Young diagram of $\lambda$ with letters in $\mathcal A_m$ such that entries are weakly increasing in rows and columns, each letter $j^+$ appears at most once in each column, and each letter $j^-$ appears at most once in each row. We also make a standard abuse of notation and write $b\in \lambda$ to denote a box/cell in the Young diagram with shape $\lambda$. For $T\in \operatorname{SSYT}^{\pm}_{m}(\lambda)$ and $b\in \lambda$, we use $T(b)$ to denote the symbol in $\mathcal{A}_m$ that appears in the box $b$.

With this preparation, we are ready to state our next result that describes the distribution of the shape $\operatorname{sh}(\pi)$ when $\pi\in \nu_{n, m}^{(p)}$. Define
\[
S_{\lambda}^{(m,p)}:=\sum_{T\in\operatorname{SSYT}^{\pm}_{m}(\lambda)}\prod_{b\in\lambda}\operatorname{wt}(T(b)),
\]
where $\operatorname{wt}(j^+)=p/m$ and $\operatorname{wt}(j^-)=q/m$.

\begin{theorem}[Distribution of RSK shape]
\label{thm:rsk-shape}
Let $\pi\sim\nu^{(p)}_{n,m}$. If $Q$ is a standard Young tableau of shape $\lambda\vdash n$, then
\[
\Pbb\{Q(\pi^{-1})=Q\}=S_{\lambda}^{(m,p)}.
\]
Consequently,
\[
\Pbb\{\operatorname{sh}(\pi)=\lambda\}=f^{\lambda}S_{\lambda}^{(m,p)},
\]
where $f^{\lambda}$ is the number of standard Young tableaux of shape $\lambda$.
\end{theorem}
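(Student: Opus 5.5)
The plan is to start from Lemma~\ref{lem:mixed-standardization}, which gives $\pi^{-1}\stackrel{d}{=}\operatorname{std}_{\pm}(a)$ with $a=(a_1,\ldots,a_n)$ a word of independent letters of weight $\wt$. Hence
\[
\Pbb\{Q(\pi^{-1})=Q\}=\sum_{a:\,Q(\operatorname{std}_{\pm}(a))=Q}\ \prod_{i=1}^n\wt(a_i).
\]
So it suffices to build a weight-preserving bijection between words $a\in\mathcal A_m^n$ and pairs $(T,Q')$. Here $T\in\operatorname{SSYT}^{\pm}_m(\lambda)$ and $Q'$ is a standard Young tableau of the same shape $\lambda$. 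The bijection must satisfy $Q'=Q(\operatorname{std}_{\pm}(a))$, and the content of $T$ must be the multiset of letters of $a$.

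The tool is the super (mixed) RSK correspondence of Berele--Remmel / Bonetti--Senato--Venezia, for a $\mathbb{Z}_2$-graded alphabet. We declare the letters $j^+$ even and $j^-$ odd, and row-insert the letters $a_1,a_2,\ldots$ in order.
\begin{enumerate}
\item An even letter $x$ bumps the leftmost entry strictly greater than $x$.
\item An odd letter $x$ bumps the leftmost entry greater than or equal to $x$.
\end{enumerate}
The insertion tableau is then weakly increasing in rows and columns. Even letters form horizontal strips, so each appears at most once per column. Odd letters form vertical strips, so each appears at most once per row. This is exactly the definition of $\operatorname{SSYT}^{\pm}_m(\lambda)$. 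We record the growth in a standard tableau $Q'$. Invertibility is standard: reverse bumping is well defined given the corner recorded by $Q'$. Hence $a\mapsto(T,Q')$ is a bijection onto $\bigsqcup_\lambda \operatorname{SSYT}^{\pm}_m(\lambda)\times\mathrm{SYT}(\lambda)$, and it preserves the multiset of letters, hence the weight.

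The key step is to check that $Q'$ equals the ordinary RSK recording tableau of $\operatorname{std}_{\pm}(a)$. Under $\operatorname{std}_{\pm}$, equal even letters become increasing values from left to right, while equal odd letters become decreasing values. Inserting a repeated even letter with strict bumping behaves like inserting a larger distinct value. Inserting a repeated odd letter with weak bumping behaves like inserting a smaller distinct value, which bumps the earlier copy. I would prove by induction on $k$ that the super insertion of $a_1\cdots a_k$ and the ordinary insertion of $\operatorname{std}_{\pm}(a_1\cdots a_k)$ have the same shape at every stage. More precisely, the ordinary insertion tableau should destandardize entrywise to the super one, with ties resolved in each bumping comparison exactly as the standardization order dictates. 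Then each insertion adds a box in the same place, so the recording tableaux coincide. This compatibility check, a careful case analysis of a single row insertion with tied letters, is the main obstacle. I expect it to be routine but fiddly, and I would first verify it on one row and then note that the bumped letter carries its standardized label into the next row.

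Granting this, the proof finishes as follows. For fixed $Q$ of shape $\lambda$, the words $a$ with $Q(\operatorname{std}_{\pm}(a))=Q$ are in weight-preserving bijection with $\operatorname{SSYT}^{\pm}_m(\lambda)$. Therefore $\Pbb\{Q(\pi^{-1})=Q\}=S^{(m,p)}_\lambda$. Since $\operatorname{sh}(\pi)=\operatorname{sh}(\pi^{-1})$, summing over the $f^\lambda$ standard tableaux of shape $\lambda$ gives $\Pbb\{\operatorname{sh}(\pi)=\lambda\}=f^\lambda S^{(m,p)}_\lambda$.
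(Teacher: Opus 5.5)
Your argument is correct and produces the same bijection as the paper's, but the two proofs build it in opposite orders.

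\textbf{The paper's route.} The paper applies ordinary RSK to $\sigma=\operatorname{std}_{\pm}(a)$ and then destandardizes the insertion tableau, replacing the value $\sigma(i)$ by $a_i$. In that order the recording tableau is $Q(\operatorname{std}_{\pm}(a))$ by construction. What must be checked instead is:
\begin{enumerate}
\item the destandardized tableau lies in $\operatorname{SSYT}^{\pm}_m(\lambda)$;
\item the reverse map (standardize $T$, apply inverse RSK, destandardize) returns a word with the right $\operatorname{std}_{\pm}$.
\end{enumerate}

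\textbf{Your route.} You start from super RSK, where bijectivity and the $\operatorname{SSYT}^{\pm}$ property come for free. You pay with the compatibility lemma you flag as the main obstacle.

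\textbf{A common tool for both.} Both sets of checks rest on one standard fact, $\Des(P(\sigma))=\Des(\sigma^{-1})$. It gives the following strip property:
\begin{enumerate}
\item a set of values that are consecutive among those present and occur in increasing positions forms a horizontal strip of the insertion tableau, with labels increasing to the right (hence weakly upward);
\item such a set occurring in decreasing positions forms a vertical strip, with labels increasing downward.
\end{enumerate}
Applied to each prefix $\sigma(1)\cdots\sigma(k)$, this settles your case analysis at once:
\begin{enumerate}
\item If an incoming copy of $j^+$ met a tied copy with larger label in some row, ordinary insertion would bump a tied copy with larger label into the row below the incoming one.
\item If an incoming copy of $j^-$ met a tied copy with smaller label, ordinary insertion would leave both in one row.
\end{enumerate}
Either outcome contradicts the strip property. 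So the two insertions make the same bumping choices and add the same box at every step.

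\textbf{What each route buys.} Your route places the theorem inside the super-RSK framework, which is also the natural source of the signed Cauchy identity used later for inverse descents. Make sure the version you cite allows even and odd letters to interleave, as in $1^+<1^-<2^+<\cdots$; the usual proof handles this unchanged. The paper's route is shorter because it never needs a second insertion algorithm.

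You are also right to state $\operatorname{sh}(\pi)=\operatorname{sh}(\pi^{-1})$ explicitly for the second assertion; the paper uses it without comment.
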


\begin{proof}
By Lemma~\ref{lem:mixed-standardization}, it is enough to work with $\operatorname{std}_{\pm}(a)$.  Apply ordinary RSK to $\operatorname{std}_{\pm}(a)$\textcolor{blue}{\@.} If the entry $r$ of the insertion tableau corresponds to position $i$, replace $r$ by the original letter $a_i$.  The result is a signed semistandard tableau: equal $j^+$ entries form a horizontal strip because their ties were standardized increasingly, and equal $j^-$ entries form a vertical strip because their ties were standardized decreasingly.  Conversely, starting from such a signed semistandard tableau $T$ and a standard tableau $Q$ of the same shape, standardize equal $j^+$ entries in the order compatible with increasing ties and equal $j^-$ entries in the order compatible with decreasing ties, then apply inverse RSK\textcolor{blue}{\@.}  These two constructions are inverse and preserve weights.  Hence, for every fixed recording tableau $Q_0$ of shape $\lambda$, the total weight of words producing it is $S_{\lambda}^{(m,p)}$.  Summing over the $f^{\lambda}$ possible recording tableaux gives the shape formula.
\end{proof}

\begin{remark}
When $p=q=1/2$, the function $S_{\lambda}^{(m,p)}$ becomes the specialization of the extended Schur function used in~\cite[Theorem 3.9]{DiaconisFulmanHolmes2013}. 
\end{remark}

\begin{proposition}[Determinant form]
\label{prop:det-rsk}
Let $h_r^{(m,p)}$ be defined by $h_0^{(m,p)}=1$, $h_r^{(m,p)}=0$ for $r<0$, and
\[
\sum_{r\geq0}h_r^{(m,p)}z^r=\frac{(1+qz/m)^m}{(1-pz/m)^m}.
\]
Then
\[
S_{\lambda}^{(m,p)}=\det\left(h_{\lambda_i-i+j}^{(m,p)}\right)_{1\leq i,j\leq \ell(\lambda)}.
\]
\end{proposition}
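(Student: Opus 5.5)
The plan is to recognise $S_{\lambda}^{(m,p)}$ as a supersymmetric (hook) Schur function $s_\lambda(x/y)$ evaluated at a specialization, and then to invoke the Jacobi--Trudi identity for supersymmetric Schur functions. Concretely, take $m$ ``even'' variables $x_j=p/m$ and $m$ ``odd'' variables $y_j=q/m$, indexed by the letters $j^+$ and $j^-$ with the total order $1^+<1^-<\cdots<m^+<m^-$. The fillings in $\operatorname{SSYT}^{\pm}_m(\lambda)$ are exactly the $(m|m)$-semistandard supertableaux: entries weakly increase along rows and columns, unprimed letters $j^+$ form horizontal strips, and primed letters $j^-$ form vertical strips. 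Their weighted sum is therefore the hook Schur function $hs_\lambda(x;y)$ with this ordering of the alphabet. A standard fact (see e.g. Berele--Regev, or Macdonald I.5 Ex.~23) says that $hs_\lambda$ does not depend on how the two alphabets are interleaved.

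Next I would record the one-row case. For $\lambda=(r)$ a filling is a weakly increasing word in which each $j^-$ appears at most once. Hence
\[
\sum_{r\ge0} S_{(r)}^{(m,p)} z^r=\prod_{j=1}^m \frac{1+y_jz}{1-x_jz}=\frac{(1+qz/m)^m}{(1-pz/m)^m},
\]
so $S_{(r)}^{(m,p)}=h_r^{(m,p)}$. This identifies $h_r^{(m,p)}$ with the supersymmetric complete function $h_r(x/y)$, that is, the image of $h_r$ under the homomorphism $\omega$-twisted on the $y$ variables.

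The main step is the Jacobi--Trudi identity $hs_\lambda=\det(h_{\lambda_i-i+j}(x/y))$. I would derive it from the ordinary Jacobi--Trudi identity via the superization homomorphism $\varphi:\Lambda\to$ functions of $(x,y)$, defined on power sums by $p_k\mapsto p_k(x)-(-1)^k p_k(y)$. Equivalently, $\varphi$ sends $\sum h_r z^r$ to $\prod(1+y_jz)/\prod(1-x_jz)$, so $\varphi(h_r)=h_r(x/y)$.

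The key claim, and the expected obstacle, is that $\varphi(s_\lambda)$ equals the combinatorial sum over supertableaux. To prove it I would use the coproduct: $s_\lambda(x,y)=\sum_\mu s_\mu(x)s_{\lambda/\mu}(y)$, and the superization applies $\omega$ to the $y$ part, so $\varphi(s_\lambda)=\sum_\mu s_\mu(x)s_{\lambda'/\mu'}(y)$. The right-hand side is precisely the tableau sum with all $x$-letters placed below all $y$-letters, since the $y$-letters fill the skew part as a transposed semistandard tableau, giving vertical strips. For the interleaved order $1^+<1^-<\cdots$ I would appeal to the independence of ordering. Alternatively, I would avoid that fact altogether by noting that in the RSK argument of Theorem~\ref{thm:rsk-shape} the sum over fillings equals a $Q$-fibre weight, which is visibly symmetric under reordering letters, because $S^{(m,p)}_\lambda$ only depends on the multiset of weights. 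Applying $\varphi$ to $s_\lambda=\det(h_{\lambda_i-i+j})$ and specializing $x_j=p/m$, $y_j=q/m$ then gives the proposition.

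As a fallback, I would prove the determinant directly by a Lindström--Gessel--Viennot lattice path argument. Each row of $T$ becomes a path whose horizontal steps at level $j^+$ may repeat, while steps at level $j^-$ are single diagonal steps. The path generating function between the appropriate endpoints is $h_{\lambda_i-i+j}^{(m,p)}$, and the nonintersecting families correspond bijectively to $\operatorname{SSYT}^{\pm}_m(\lambda)$. The delicate point is checking that the column condition, strictness for $j^+$ and weak increase for $j^-$, matches nonintersection; this is the standard super LGV verification.
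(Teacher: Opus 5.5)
Your proposal is correct, but your main route differs from the paper's; the paper's proof is essentially your fallback. The paper identifies $h_r^{(m,p)}$ as the weight of one-row signed tableaux, in which positive letters may repeat and negative letters may not. It then asserts that the standard lattice-path proof of Jacobi--Trudi \cite[Theorem 7.16.1]{Stan24} goes through with the signed row and column restrictions, leaving the nonintersection check implicit.

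Your main route instead pushes the classical identity $s_\lambda=\det(h_{\lambda_i-i+j})$ through the ring homomorphism $\varphi$. The only combinatorial input is then the identification of $\varphi(s_\lambda)$ with the tableau sum, and no new path-crossing verification is needed. Independence from the interleaving of $\pm$ letters also comes out as a by-product. The paper's route is self-contained and bijective. Its one genuinely new check is exactly the part left to the reader: matching strict columns for $j^+$ and weak columns for $j^-$ with nonintersection of paths that use diagonal steps.

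You should drop one justification: the claim that the $Q$-fibre weight from Theorem~\ref{thm:rsk-shape} is visibly independent of the order on $\mathcal A_m$. Which words lie in a given fibre depends on the standardization rule, hence on that order. The assertion that $S_\lambda^{(m,p)}$ depends only on the multiset of weights is precisely the order-independence you are trying to establish, so the argument is circular.

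You need neither this nor the cited theorem on interleavings. Iterate the coproduct along the letters in the order $1^+<1^-<\cdots<m^+<m^-$, so that $s_\lambda\mapsto\sum s_{\lambda^1/\lambda^0}\otimes\cdots\otimes s_{\lambda^{2m}/\lambda^{2m-1}}$ over chains $\emptyset=\lambda^0\subseteq\cdots\subseteq\lambda^{2m}=\lambda$. Apply $\omega$ in the tensor factors belonging to the letters $j^-$, and evaluate each factor at a single variable.

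Since $\omega(s_{\nu/\kappa})=s_{\nu'/\kappa'}$ is nonzero in one variable exactly when $\nu/\kappa$ is a vertical strip, the image of $s_\lambda$ is the sum over $\operatorname{SSYT}^{\pm}_{m}(\lambda)$ in the interleaved order. On power sums this composite sends $p_k\mapsto\sum_j x_j^k+(-1)^{k-1}\sum_j y_j^k$, so it coincides with $\varphi$ whatever the ordering.
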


\begin{proof}
The generating function for one-row signed semistandard tableaux is the displayed series for $h_r^{(m,p)}$, since positive letters may repeat in a row and negative letters may not.  The usual proof of the Jacobi--Trudi identity~\cite[Theorem 7.16.1]{Stan24}, with the row and column restrictions above, gives the determinant.
\end{proof}

We use Theorem~\ref{thm:rsk-shape} to give a formula for the generating function of $D_n^{\leftarrow}:=\des(\pi^{-1})$, thereby proving the corrected asymmetric analogue of~\cite[Theorem 3.10]{DiaconisFulmanHolmes2013}.

\begin{theorem}[Generating function for inverse descents]
\label{thm:inverse-descent-gf}
Fix $n\geq 1, m\geq 1$ and let $p\in (0, 1)$. Then, 
\[
\E\left[t^{D_n^{\leftarrow}+1}\right]
=(1-t)^{n+1}\sum_{k\geq1}t^k[z^n]
\left(\frac{1+qz/m}{1-pz/m}\right)^{km}.
\]
\end{theorem}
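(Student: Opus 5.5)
The plan is to bypass Theorem~\ref{thm:rsk-shape} and work directly with the mixed-standardization model of Lemma~\ref{lem:mixed-standardization}. There, $\pi^{-1}\stackrel{d}{=}\sigma:=\operatorname{std}_{\pm}(a)$ for i.i.d.\ letters $a_1,\ldots,a_n\in\mathcal A_m$ with weights $\wt$. The first step is to read off $\Des(\sigma)$ from the word. By the tie-breaking rule, $\sigma_i>\sigma_{i+1}$ holds exactly when either $a_i>a_{i+1}$ in the total order of $\mathcal A_m$, or $a_i=a_{i+1}=j^-$ for some $j$. Equal $j^+$ letters produce an ascent. So $D_n^{\leftarrow}\stackrel{d}{=}\des_{\pm}(a)$, a signed descent count of the word, and it depends on $a$ only through adjacent comparisons.

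Next I use the standard ``barred'' (Stanley/Gessel) expansion. For any $\sigma\in S_n$ with $d=\des(\sigma)$,
\[
\frac{t^{d+1}}{(1-t)^{n+1}}=\sum_{k\geq1}t^k\binom{n+k-1-d}{n},
\]
and $\binom{n+k-1-d}{n}$ counts the weakly increasing maps $f:[n]\to[k]$ with $f(i)<f(i+1)$ whenever $i\in\Des(\sigma)$. Multiplying by $\prod_i\wt(a_i)$ and summing over all words reduces the theorem to the following weighted identity: the total weight of pairs $(a,f)$ with $f$ compatible with $\Des(\operatorname{std}_{\pm}(a))$ equals $[z^n]\bigl((1+qz/m)/(1-pz/m)\bigr)^{km}$.

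To prove that identity, I encode a compatible pair as the sequence of pairs $(f(i),a_i)$ in the alphabet $[k]\times\mathcal A_m$, ordered lexicographically with $f$ first. This alphabet consists of $km$ ``plus'' letters $(c,j^+)$ and $km$ ``minus'' letters $(c,j^-)$, in the same interleaved pattern as $\mathcal A_m$. I claim that $(a,f)$ is compatible if and only if this sequence is signed-weakly increasing: each step is weakly increasing, equal plus letters may repeat, and equal minus letters may not. Indeed, where $f(i)<f(i+1)$ there is no constraint on $a$. Where $f(i)=f(i+1)$, the requirement $i\notin\Des(\sigma)$ is exactly $a_i<a_{i+1}$ or $a_i=a_{i+1}=j^+$. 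Conversely, any signed-weakly increasing sequence determines $f$ (weakly increasing) and $a$ uniquely.

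Such sequences are multisets in which each plus letter is used any number of times and each minus letter at most once, so their generating function is $\prod_{\text{plus}}(1-pz/m)^{-1}\prod_{\text{minus}}(1+qz/m)=\bigl((1+qz/m)/(1-pz/m)\bigr)^{km}$. Extracting $[z^n]$, multiplying by $(1-t)^{n+1}$ and summing over $k$ gives the theorem. The main point of care is the first step: verifying the exact descent/tie rule for $\operatorname{std}_{\pm}$, in particular that equal $j^-$ letters give descents of $\sigma$ itself and not of $\sigma^{-1}$. After that, the encoding is routine. Alternatively, one could use Theorem~\ref{thm:rsk-shape}, since descents of $\pi^{-1}$ are determined by $Q(\pi^{-1})$, but the direct word argument avoids summing over tableaux.
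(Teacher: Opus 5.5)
Your proof is correct, and it takes a genuinely different route from the paper.

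\textbf{The paper's route.} The paper goes through Theorem~\ref{thm:rsk-shape}. It uses a mixed RSK correspondence to get $\Pbb\{Q(\pi^{-1})=Q\}=S_\lambda^{(m,p)}$. It then uses $\Des(\pi^{-1})=\Des(Q(\pi^{-1}))$ and the tableau descent identity $\sum_r f_\lambda(r)t^{r+1}=(1-t)^{n+1}\sum_k s_\lambda(1^k)t^k$. Finally, it imports the signed Cauchy identity from Stembridge to sum over $\lambda$.

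\textbf{Your route.} You stay with the word $a$ throughout, and each step checks out:
\begin{itemize}
\item Your tie rule is right: $i\in\Des(\operatorname{std}_{\pm}(a))$ iff $a_i>a_{i+1}$ or $a_i=a_{i+1}=j^-$. This is because equal $j^-$ letters are ranked in decreasing order of position.
\item The barred expansion $t^{d+1}/(1-t)^{n+1}=\sum_{k\geq1}t^k\binom{n+k-1-d}{n}$ is right. The binomial vanishes for $k\leq d$, and it counts the compatible $f$.
\item The lexicographic encoding of compatible pairs $(a,f)$ is a clean bijection. It sends them to multisets on $[k]\times\mathcal A_m$ in which minus letters occur at most once. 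The key point is that, since the word is weakly increasing, ``no adjacent repeat of a minus letter'' is the same as ``each minus letter used at most once.'' This yields $\bigl((1+qz/m)/(1-pz/m)\bigr)^{km}$ directly.
\end{itemize}
In effect, you re-derive by a $P$-partition argument exactly the specialization $\sum_\lambda S^{(m,p)}_\lambda s_\lambda(1^k)z^{|\lambda|}$ that the paper cites.

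\textbf{What each route buys.} Yours is elementary and self-contained. It needs only Lemma~\ref{lem:mixed-standardization}, and it avoids both the mixed RSK bijection (whose proof the paper only sketches) and the external Cauchy identity. Weighting $f$ by $\prod_i x_{f(i)}$ in the same encoding would also give the full distribution of $\Des(\pi^{-1})$ in quasisymmetric form. The paper's route instead presents the inverse-descent formula as a corollary of the stronger shape result, Theorem~\ref{thm:rsk-shape}, mirroring Diaconis--Fulman--Holmes. That shape distribution is information your argument does not produce.
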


\begin{proof}
For a standard Young tableau $Q$, the descent set of $Q$ is the set of $i$ such that $i+1$ lies in a lower row than $i$.  The standard RSK identity gives $\Des(\sigma)=\Des(Q(\sigma))$ for any permutation $\sigma$~\cite[Lemma 7.23.1]{Stan24}. Let $f_{\lambda}(r)$ be the number of standard Young tableaux of shape $\lambda$ with $r$ descents.  By Theorem~\ref{thm:rsk-shape},
\[
\Pbb\{D_n^{\leftarrow}=r\}=\sum_{\lambda\vdash n}f_{\lambda}(r)S_{\lambda}^{(m,p)}.
\]
The standard tableau descent identity (see~\cite[Equation (7.96)]{Stan24}) gives
\[
\sum_{r\geq0}f_{\lambda}(r)t^{r+1}=(1-t)^{n+1}\sum_{k\geq1}s_{\lambda}(1^k)t^k,
\]
where $s_{\lambda}(1^k)$ is the ordinary Schur function evaluated at $k$ variables all equal to $1$.  The corresponding signed Cauchy identity gives~\cite[Appendix A.4]{Stem97}
\[
\sum_{\lambda}S_{\lambda}^{(m,p)}s_{\lambda}(1^k)z^{|\lambda|}
=\left(\frac{1+qz/m}{1-pz/m}\right)^{km}.
\]
Multiplying the tableau descent identity by $S_{\lambda}^{(m,p)}$, summing over $\lambda\vdash n$, and applying the Cauchy identity proves the theorem.
\end{proof}

We conclude with the following corollary that gives the mean and variance of $D_n^{\leftarrow}$.
\begin{corollary}[Mean and variance of inverse descents]
\label{cor:inverse-desc-moments}
Assume $n\geq 2, m\geq 1$ and $p\in (0, 1)$. Put $\dd=2p-1$. Then,
\[
\E[D_n^{\leftarrow}]=\frac{n-1}{2}-\frac{\dd(n-1)}{2m},
\]
and
\[
\Var(D_n^{\leftarrow})=\frac{n+1}{12}+\frac{n-2}{6m^2}-\frac{\dd^2(n-1)}{4m^2}.
\]
In particular, for $p=q=1/2$, this gives
\[
\E[D_n^{\leftarrow}]=\frac{n-1}{2},
\qquad
\Var(D_n^{\leftarrow})=\frac{n+1}{12}+\frac{n-2}{6m^2},
\]
which recovers~\cite[Corollary 3.11]{DiaconisFulmanHolmes2013}.
\end{corollary}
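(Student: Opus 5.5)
We would not differentiate the generating function in Theorem~\ref{thm:inverse-descent-gf}. Instead we would work directly with the word model of Lemma~\ref{lem:mixed-standardization}. There $\pi^{-1}\stackrel{d}{=}\operatorname{std}_{\pm}(a)$, with $a_1,\ldots,a_n$ i.i.d.\ letters of $\mathcal A_m$ with weights $\wt$.

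\textbf{Step 1: descents as local indicators.} Position $i$ is a descent of $\operatorname{std}_{\pm}(a)$ exactly when $a_i>a_{i+1}$ in the total order, or when $a_i=a_{i+1}=j^-$ for some $j$. Ties among $j^+$ letters are broken increasingly, so they give ascents; ties among $j^-$ letters are broken decreasingly, so they give descents. Write $x\succ y$ for this relation and $X_i=\1\{a_i\succ a_{i+1}\}$. Then $D_n^{\leftarrow}\stackrel{d}{=}\sum_{i=1}^{n-1}X_i$, and the sequence $(X_i)$ is stationary and $1$-dependent.

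\textbf{Step 2: the mean.} Put $s_k:=\sum_{x}\wt(x)^k=(p^k+q^k)/m^{k-1}$. By symmetry $\Pbb(a_i>a_{i+1})=(1-s_2)/2$, and the probability of a negative tie is $m(q/m)^2=q^2/m$. Hence
\[
\Pbb(X_i=1)=\frac{1}{2}-\frac{p^2+q^2}{2m}+\frac{q^2}{m}=\frac12-\frac{\dd}{2m}.
\]
Multiplying by $n-1$ gives the stated mean.

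\textbf{Step 3: the variance.} By $1$-dependence,
\[
\Var(D_n^{\leftarrow})=(n-1)\,\mu(1-\mu)+2(n-2)\bigl(\Pbb(X_1=X_2=1)-\mu^2\bigr),
\qquad \mu=\tfrac12-\tfrac{\dd}{2m}.
\]
The only new input is $P_3:=\Pbb(a_1\succ a_2\succ a_3)$. We would split this according to which consecutive letters are equal:
\begin{itemize}
\item[(i)] all three letters distinct and strictly decreasing, with probability $(1-3s_2+2s_3)/6$;
\item[(ii)] $a_1=a_2=j^-$ and $a_2>a_3$;
\item[(iii)] $a_1>a_2$ and $a_2=a_3=j^-$;
\item[(iv)] $a_1=a_2=a_3=j^-$, with probability $m(q/m)^3$.
\end{itemize}
Cases (ii) and (iii) are sums over $j$ of $(q/m)^2$ times the weight lying strictly below, respectively strictly above, $j^-$. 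Their combined contribution is $(q^2/m^2)\sum_j\bigl(1-\tfrac{q}{m}\bigr)=(q^2/m)(1-q/m)$, since the weight strictly below plus the weight strictly above any letter $x$ is $1-\wt(x)$.

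\textbf{Step 4: simplification.} Substituting into the variance formula and writing $p^2+q^2=(1+\dd^2)/2$, $p^3+q^3=(1+3\dd^2)/4$ and $q=(1-\dd)/2$, we collect the terms in $1$, $\dd$ and $\dd^2$. The terms linear in $\dd$ should cancel, leaving $\frac{n+1}{12}+\frac{n-2}{6m^2}-\frac{\dd^2(n-1)}{4m^2}$.

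The main obstacle is this bookkeeping in the covariance term. As a check we would set $\dd=0$ and compare with~\cite[Corollary 3.11]{DiaconisFulmanHolmes2013}. We would also compare with the first two $t$-derivatives at $t=1$ of Theorem~\ref{thm:inverse-descent-gf}, computed via the residue-type identities used in Corollary~\ref{cor:MomentsOfDescents}; this gives an independent verification of the final formula.
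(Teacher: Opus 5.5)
Your proposal is correct, and it takes a genuinely different route from the paper. The paper works only with the generating function of Theorem~\ref{thm:inverse-descent-gf}. It sets $H(z)=\bigl((1+qz/m)/(1-pz/m)\bigr)^m$ and expands $[z^n]H(z)^k$ in its three top powers of $k$, using $\log H(z)=z+\frac{\dd}{2}\frac{z^2}{m}+\frac{p^3+q^3}{3}\frac{z^3}{m^2}+O(z^4)$. It then rewrites each piece as $(1-t)^jA_{n-j}(t)$, with $A_j$ the Eulerian polynomial, and reads off the first two derivatives at $t=1$ from the known Eulerian moments.

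You instead use the fact that descents of $\operatorname{std}_{\pm}(a)$ are local functions of an i.i.d.\ word, so only two- and three-letter probabilities are needed. Your bookkeeping does close, and the linear terms in $\dd$ cancel as you expected. Cases (ii)--(iv) sum to exactly $q^2/m$, which gives
\[
P_3=\frac16-\frac{\dd}{2m}+\frac{1+3\dd^2}{12m^2},\qquad
\Var(X_1)=\frac14-\frac{\dd^2}{4m^2},\qquad
\operatorname{Cov}(X_1,X_2)=-\frac1{12}\Bigl(1-\frac1{m^2}\Bigr).
\]
Then $(n-1)\Var(X_1)+2(n-2)\operatorname{Cov}(X_1,X_2)$ is exactly the stated variance; for $n=2$ the covariance term is absent, consistent with the formula. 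A quick sanity check: for $m=1$, $X_i=\1\{a_i=1^-\}$, so $D_n^{\leftarrow}$ is $\operatorname{Bin}(n-1,q)$, as the formula predicts.

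What your route buys:
\begin{itemize}
\item It needs only Lemma~\ref{lem:mixed-standardization}, so it bypasses the RSK shape theorem, the signed Cauchy identity and the Eulerian moment formulas.
\item It shows the adjacent covariance does not depend on $p$, so all of the $\dd^2$-dependence comes from the single-site variance.
\item The $1$-dependent structure gives direct access to limit theorems for $D_n^{\leftarrow}$.
\end{itemize}
What the paper's route buys is the whole distribution of $D_n^{\leftarrow}$. Higher moments then follow from the same mechanical expansion, whereas your case analysis grows quickly with the order of the moment. Your locality argument is also special to $\pi^{-1}$: descents of $\pi$ itself are not local in the word model, so the same trick does not directly give Corollary~\ref{cor:MomentsOfDescents}.
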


\begin{proof}
Put
\[
H(z):=\left(\frac{1+qz/m}{1-pz/m}\right)^m,
\qquad
\alpha:=\frac{\dd}{2},
\qquad
\beta:=\frac{p^3+q^3}{3}.
\]
Then
\[
\log H(z)=z+\alpha\frac{z^2}{m}+\beta\frac{z^3}{m^2}+O(z^4).
\]
For fixed $n$, the coefficient $[z^n]H(z)^k$ is a polynomial in $k$ of degree $n$.  Its three highest-degree terms are
\[
[z^n]H(z)^k
=\frac{k^n}{n!}+\frac{\alpha k^{n-1}}{m(n-2)!}
+\left(\frac{\beta}{m^2(n-3)!}+\frac{\alpha^2}{2m^2(n-4)!}\right)k^{n-2}+P_{\leq n-3}(k),
\]
where terms with negative factorials are omitted.  Let
\[
A_j(t):=\sum_{\sigma\in S_j}t^{\des(\sigma)+1}=(1-t)^{j+1}\sum_{k\geq1}k^j t^k
\]
be the Eulerian polynomial.  Substituting the preceding expansion into Theorem~\ref{thm:inverse-descent-gf} gives, up to a term divisible by $(1-t)^3$ (which does not affect the first two derivatives at $t=1$),
\[
\E\left[t^{D_n^{\leftarrow}+1}\right]
=\frac{A_n(t)}{n!}+\frac{\alpha(1-t)A_{n-1}(t)}{m(n-2)!}
+\left(\frac{\beta}{m^2(n-3)!}+\frac{\alpha^2}{2m^2(n-4)!}\right)(1-t)^2A_{n-2}(t).
\]
Using the well-known Eulerian moments (see, for instance, the proof of~\cite[Corollary 3.11]{DiaconisFulmanHolmes2013})
\[
\frac{A'_n(1)}{n!}=\frac{n+1}{2},
\qquad
\frac{A''_n(1)}{n!}=\frac{3n^2+n-2}{12},
\]
we obtain
\[
\E[D_n^{\leftarrow}+1]=\frac{n+1}{2}-\frac{\alpha(n-1)}{m}
\]
and
\[
\E[D_n^{\leftarrow}(D_n^{\leftarrow}+1)]
=\frac{3n^2+n-2}{12}-\frac{\alpha n(n-1)}{m}
+\frac{2\beta(n-2)+\alpha^2(n-2)(n-3)}{m^2}.
\]
The stated variance follows from
\[
\Var(D_n^{\leftarrow})=\E[D_n^{\leftarrow}(D_n^{\leftarrow}+1)]-\E[D_n^{\leftarrow}]-(\E[D_n^{\leftarrow}])^2
\]
and the identity $\beta=(1+3\dd^2)/12$.
\end{proof}


\bibliographystyle{amsalpha} 
\bibliography{references}

@article {chen2025cutoff,
    AUTHOR = {Chen, Ray and Ottolini, Andrea},
     TITLE = {Cutoff in total variation for the shelf shuffle},
   JOURNAL = {Electron. Commun. Probab.},
  FJOURNAL = {Electronic Communications in Probability},
    VOLUME = {30},
      YEAR = {2025},
     PAGES = {Paper No. 44, 10},
      ISSN = {1083-589X},
   MRCLASS = {60J10},
  MRNUMBER = {4908782},
       DOI = {10.1214/25-ecp691},
       URL = {https://doi.org/10.1214/25-ecp691},
}

@unpublished{kuba,
  title={On Card guessing after a single shelf shuffle},
  author={Clay, Alexander  and Kuba, Markus and Tripathi, Raghavendra},
  year={2026},
  note={arXiv:2602.12928},
  publisher={arXiv}
}

@unpublished{clay2025guessing,
  title={Guessing Strategies for Shuffling Machines},
  author={Clay, Alexander},
  journal={arXiv preprint},
  note={arXiv:2507.10294},
  year={2025}
}

@unpublished{clay2025limit,
  title={Limit Theorems for Descents and Inversions of Shelf-Shuffles},
  author={Clay, Alexander},
  journal={arXiv preprint},
  note={arXiv:2510.00343},
  year={2025}
}

@unpublished{tripathi2026position,
  title={On the position matrix of single-shelf shuffle and card guessing},
  author={Tripathi, Raghavendra},
  journal={arXiv preprint},
  note={arXiv:2602.07920},
  year={2026}
}

@unpublished{Trip26Law,
  title={Cutoff for the asymmetric shelf shuffle},
  author={Tripathi, Raghavendra},
  journal={arXiv preprint},
  note={to appear},
  year={2026}
}

@article {Tanny73Eulerian,
    AUTHOR = {Tanny, S.},
     TITLE = {A probabilistic interpretation of {E}ulerian numbers},
   JOURNAL = {Duke Math. J.},
  FJOURNAL = {Duke Mathematical Journal},
    VOLUME = {40},
      YEAR = {1973},
     PAGES = {717--722},
      ISSN = {0012-7094,1547-7398},
   MRCLASS = {05A99 (60C05)},
  MRNUMBER = {340045},
MRREVIEWER = {Bernard\ Harris},
       URL = {http://projecteuclid.org/euclid.dmj/1077310048},
}

@article {DiaconisFulmanHolmes2013,
    AUTHOR = {Diaconis, Persi and Fulman, Jason and Holmes, Susan},
     TITLE = {Analysis of casino shelf shuffling machines},
   JOURNAL = {Ann. Appl. Probab.},
  FJOURNAL = {The Annals of Applied Probability},
    VOLUME = {23},
      YEAR = {2013},
    NUMBER = {4},
     PAGES = {1692--1720},
      ISSN = {1050-5164,2168-8737},
   MRCLASS = {60C05 (05A15)},
  MRNUMBER = {3098446},
MRREVIEWER = {Kent\ E.\ Morrison},
       DOI = {10.1214/12-aap884},
       URL = {https://doi.org/10.1214/12-aap884},
}

@book {DiaconisFulman2023Shuffling,
    AUTHOR = {Diaconis, Persi and Fulman, Jason},
     TITLE = {The mathematics of shuffling cards},
 PUBLISHER = {American Mathematical Society, Providence, RI},
      YEAR = {2023},
     PAGES = {xii+346},
      ISBN = {[9781470463038]},
   MRCLASS = {60-02 (05Axx 60B15 60J10 91A60)},
  MRNUMBER = {4565368},
MRREVIEWER = {Martin\ V.\ Hildebrand},
}

@article {bayer1992trailing,
    AUTHOR = {Bayer, Dave and Diaconis, Persi},
     TITLE = {Trailing the dovetail shuffle to its lair},
   JOURNAL = {Ann. Appl. Probab.},
  FJOURNAL = {The Annals of Applied Probability},
    VOLUME = {2},
      YEAR = {1992},
    NUMBER = {2},
     PAGES = {294--313},
      ISSN = {1050-5164,2168-8737},
   MRCLASS = {60C05 (20B30 60B15)},
  MRNUMBER = {1161056},
MRREVIEWER = {David\ J.\ Aldous},
       URL =
              {http://links.jstor.org/sici?sici=1050-5164(199205)2:2<294:TTDSTI>2.0.CO;2-F&origin=MSN},
}

@article {aldous1986shuffling,
    AUTHOR = {Aldous, David and Diaconis, Persi},
     TITLE = {Shuffling cards and stopping times},
   JOURNAL = {Amer. Math. Monthly},
  FJOURNAL = {American Mathematical Monthly},
    VOLUME = {93},
      YEAR = {1986},
    NUMBER = {5},
     PAGES = {333--348},
      ISSN = {0002-9890,1930-0972},
   MRCLASS = {60C05 (60B15 60G40 60J15)},
  MRNUMBER = {841111},
MRREVIEWER = {Endre\ Cs\'aki},
       DOI = {10.2307/2323590},
       URL = {https://doi.org/10.2307/2323590},
}

@unpublished{salez2025modern,
  title={Modern aspects of Markov chains: entropy, curvature and the cutoff phenomenon},
  author={Salez, Justin},
  journal={arXiv preprint},
  note={arXiv:2508.21055},
  year={2025}
}

@article {GZ20,
    AUTHOR = {Gessel, Ira M. and Zhuang, Yan},
     TITLE = {Counting permutations by peaks, descents, and cycle type},
   JOURNAL = {S\'em. Lothar. Combin.},
  FJOURNAL = {S\'eminaire Lotharingien de Combinatoire},
    VOLUME = {84B},
      YEAR = {2020},
     PAGES = {Art. 12, 12},
      ISSN = {1286-4889},
   MRCLASS = {05A05 (05A15)},
  MRNUMBER = {4138640},
}

@article {GS20Plethystic,
    AUTHOR = {Gessel, Ira M. and Zhuang, Yan},
     TITLE = {Plethystic formulas for permutation enumeration},
   JOURNAL = {Adv. Math.},
  FJOURNAL = {Advances in Mathematics},
    VOLUME = {375},
      YEAR = {2020},
     PAGES = {107370, 55},
      ISSN = {0001-8708,1090-2082},
   MRCLASS = {05A15 (05A05 05E05)},
  MRNUMBER = {4136604},
MRREVIEWER = {Eric\ S.\ Egge},
       DOI = {10.1016/j.aim.2020.107370},
       URL = {https://doi.org/10.1016/j.aim.2020.107370},
}

@article {Hadamard,
    AUTHOR = {Hadamard, Jacques},
     TITLE = {Book {R}eview: {E}lementary {P}rinciples in {S}tatistical
              {M}echanics, {D}eveloped with especial {R}eference to the
              {R}ational {F}oundations of {T}hermodynamics},
   JOURNAL = {Bull. Amer. Math. Soc.},
  FJOURNAL = {Bulletin of the American Mathematical Society},
    VOLUME = {12},
      YEAR = {1906},
    NUMBER = {4},
     PAGES = {194--210},
      ISSN = {0002-9904},
   MRCLASS = {99-04},
  MRNUMBER = {1558318},
       DOI = {10.1090/S0002-9904-1906-01319-2},
       URL = {https://doi.org/10.1090/S0002-9904-1906-01319-2},
}

@book{Poincare1912,
	author = {Henri Poincar\'e},
	editor = {},
	publisher = {Gauthier-Villars},
	title = {Calcul des Probabilit\'{e}s},
	year = {1912}
}

@article {Diaconis81Generating,
    AUTHOR = {Diaconis, Persi and Shahshahani, Mehrdad},
     TITLE = {Generating a random permutation with random transpositions},
   JOURNAL = {Z. Wahrsch. Verw. Gebiete},
  FJOURNAL = {Zeitschrift f\"ur Wahrscheinlichkeitstheorie und Verwandte
              Gebiete},
    VOLUME = {57},
      YEAR = {1981},
    NUMBER = {2},
     PAGES = {159--179},
      ISSN = {0044-3719},
   MRCLASS = {60C05 (60J15)},
  MRNUMBER = {626813},
MRREVIEWER = {Lars\ Holst},
       DOI = {10.1007/BF00535487},
       URL = {https://doi.org/10.1007/BF00535487},
}

@incollection {Aldous83,
    AUTHOR = {Aldous, David},
     TITLE = {Random walks on finite groups and rapidly mixing {M}arkov
              chains},
 BOOKTITLE = {Seminar on probability, {XVII}},
    SERIES = {Lecture Notes in Math.},
    VOLUME = {986},
     PAGES = {243--297},
 PUBLISHER = {Springer, Berlin},
      YEAR = {1983},
      ISBN = {3-540-12289-3},
   MRCLASS = {60J15 (60B15)},
  MRNUMBER = {770418},
MRREVIEWER = {A.\ Mukherjea},
       DOI = {10.1007/BFb0068322},
       URL = {https://doi.org/10.1007/BFb0068322},
}

@article {diaconis1995riffle,
    AUTHOR = {Diaconis, Persi and McGrath, Michael and Pitman, Jim},
     TITLE = {Riffle shuffles, cycles, and descents},
   JOURNAL = {Combinatorica},
  FJOURNAL = {Combinatorica. An International Journal on Combinatorics and the Theory of Computing},
    VOLUME = {15},
      YEAR = {1995},
    NUMBER = {1},
     PAGES = {11--29},
      ISSN = {0209-9683},
   MRCLASS = {05A15},
  MRNUMBER = {1325269},
MRREVIEWER = {Ira\ Gessel},
       DOI = {10.1007/BF01294457},
       URL = {https://doi.org/10.1007/BF01294457},
}

@book {Diaconis88Group,
    AUTHOR = {Diaconis, Persi},
     TITLE = {Group representations in probability and statistics},
    SERIES = {Institute of Mathematical Statistics Lecture Notes---Monograph
              Series},
    VOLUME = {11},
 PUBLISHER = {Institute of Mathematical Statistics, Hayward, CA},
      YEAR = {1988},
     PAGES = {vi+198},
      ISBN = {0-940600-14-5},
   MRCLASS = {60-02 (20C99 62-02)},
  MRNUMBER = {964069},
MRREVIEWER = {Philippe\ Bougerol},
}

@article {Nestoridi25Cutoff,
    AUTHOR = {Nestoridi, Evita and Yan, Alan},
     TITLE = {Cutoff for the biased random transposition shuffle},
   JOURNAL = {S\'em. Lothar. Combin.},
  FJOURNAL = {S\'eminaire Lotharingien de Combinatoire},
    VOLUME = {93B},
      YEAR = {2025},
     PAGES = {Art. 6, 12},
      ISSN = {1286-4889},
   MRCLASS = {60J10 (05E10)},
  MRNUMBER = {4968505},
}

@article {Sellke22Cutoff,
    AUTHOR = {Sellke, Mark},
     TITLE = {Cutoff for the asymmetric riffle shuffle},
   JOURNAL = {Ann. Probab.},
  FJOURNAL = {The Annals of Probability},
    VOLUME = {50},
      YEAR = {2022},
    NUMBER = {6},
     PAGES = {2244--2287},
      ISSN = {0091-1798,2168-894X},
   MRCLASS = {60C05 (20P05 60B15 60J10)},
  MRNUMBER = {4499839},
MRREVIEWER = {Wojciech\ Bartoszek},
       DOI = {10.1214/22-aop1582},
       URL = {https://doi.org/10.1214/22-aop1582},
}

@article {Knuth,
    AUTHOR = {Knuth, Donald E.},
     TITLE = {Johann {F}aulhaber and sums of powers},
   JOURNAL = {Math. Comp.},
  FJOURNAL = {Mathematics of Computation},
    VOLUME = {61},
      YEAR = {1993},
    NUMBER = {203},
     PAGES = {277--294},
      ISSN = {0025-5718,1088-6842},
   MRCLASS = {11B83 (01A45 01A55 11B57)},
  MRNUMBER = {1197512},
MRREVIEWER = {Wells\ Johnson},
       DOI = {10.2307/2152953},
       URL = {https://doi.org/10.2307/2152953},
}

@article {Kellner,
    AUTHOR = {Kellner, Bernd C.},
     TITLE = {Faulhaber polynomials and reciprocal {B}ernoulli polynomials},
   JOURNAL = {Rocky Mountain J. Math.},
  FJOURNAL = {The Rocky Mountain Journal of Mathematics},
    VOLUME = {53},
      YEAR = {2023},
    NUMBER = {1},
     PAGES = {119--151},
      ISSN = {0035-7596,1945-3795},
   MRCLASS = {11B68},
  MRNUMBER = {4585984},
       DOI = {10.1216/rmj.2023.53.119},
       URL = {https://doi.org/10.1216/rmj.2023.53.119},
}

@book {Stan24,
    AUTHOR = {Stanley, Richard P.},
     TITLE = {Enumerative combinatorics. {V}ol. 2},
    SERIES = {Cambridge Studies in Advanced Mathematics},
    VOLUME = {208},
   EDITION = {Second},
      NOTE = {With an appendix by Sergey Fomin},
 PUBLISHER = {Cambridge University Press, Cambridge},
      YEAR = {[2024] \copyright 2024},
     PAGES = {xvi+783},
      ISBN = {978-1-009-26249-1; 978-1-009-26248-4},
   MRCLASS = {05-02 (05A15 05E05 05E10 68R05)},
  MRNUMBER = {4621625},
MRREVIEWER = {Timothy\ Y.\ Chow},
}

@article {Stem97,
    AUTHOR = {Stembridge, John R.},
     TITLE = {Enriched {$P$}-partitions},
   JOURNAL = {Trans. Amer. Math. Soc.},
  FJOURNAL = {Transactions of the American Mathematical Society},
    VOLUME = {349},
      YEAR = {1997},
    NUMBER = {2},
     PAGES = {763--788},
      ISSN = {0002-9947,1088-6850},
   MRCLASS = {06A07 (05E05 05E10)},
  MRNUMBER = {1389788},
MRREVIEWER = {Joseph\ Neggers},
       DOI = {10.1090/S0002-9947-97-01804-7},
       URL = {https://doi.org/10.1090/S0002-9947-97-01804-7},
}

@book {Kyle15,
    AUTHOR = {Petersen, T. Kyle},
     TITLE = {Eulerian numbers},
    SERIES = {Birkh\"auser Advanced Texts: Basler Lehrb\"ucher.
              [Birkh\"auser Advanced Texts: Basel Textbooks]},
      NOTE = {With a foreword by Richard Stanley},
 PUBLISHER = {Birkh\"auser/Springer, New York},
      YEAR = {2015},
     PAGES = {xviii+456},
      ISBN = {978-1-4939-3090-6; 978-1-4939-3091-3},
   MRCLASS = {05-02 (05A15 05Exx 06A07 11B65 11B75 20F55)},
  MRNUMBER = {3408615},
MRREVIEWER = {Damir\ Yeliussizov},
       DOI = {10.1007/978-1-4939-3091-3},
       URL = {https://doi.org/10.1007/978-1-4939-3091-3},
}

@book {Comtet74,
    AUTHOR = {Comtet, Louis},
     TITLE = {Advanced combinatorics},
   EDITION = {enlarged},
      NOTE = {The art of finite and infinite expansions},
 PUBLISHER = {D. Reidel Publishing Co., Dordrecht},
      YEAR = {1974},
     PAGES = {xi+343},
      ISBN = {90-277-0441-4},
   MRCLASS = {05-02},
  MRNUMBER = {460128},
}

@book {Gasper04,
    AUTHOR = {Gasper, George and Rahman, Mizan},
     TITLE = {Basic hypergeometric series},
    SERIES = {Encyclopedia of Mathematics and its Applications},
    VOLUME = {96},
   EDITION = {Second},
      NOTE = {With a foreword by Richard Askey},
 PUBLISHER = {Cambridge University Press, Cambridge},
      YEAR = {2004},
     PAGES = {xxvi+428},
      ISBN = {0-521-83357-4},
   MRCLASS = {33Dxx (05A30 05E35 33-01 33-02)},
  MRNUMBER = {2128719},
MRREVIEWER = {Shaun\ Cooper},
       DOI = {10.1017/CBO9780511526251},
       URL = {https://doi.org/10.1017/CBO9780511526251},
}

\end{document}